\theoremstyle{plain}
\newtheorem{lemma}{Lemma}[section]
\newtheorem{corollary}[lemma]{Corollary}
\newtheorem{proposition}[lemma]{Proposition}
\newtheorem{definition}[lemma]{Definition}
\newcommand{\imm}{\mathrm{f}}
\newcommand{\cmc}{{\sc{cmc~}}}
\newcommand{\R}{\mathbb R}
\newcommand{\Sp}{\mathbb S}
\newcommand{\N}{\mathbb N}
\newcommand{\Order}{{\rm O}}
\newcommand{\lbcl}{c}
\newcommand{\ubdh}{C}
\newcommand{\cab}{C_0}
\newcommand{\Cab}{C_1}
\newcommand{\II}{\mathfrak{h}}
\newcommand{\normal}{\mathfrak{N}}
\newcommand{\vn}{}
\DeclareMathOperator{\arccot}{arccot}
\DeclareMathOperator{\dist}{dist}
\newcommand{\ind}[1]{_{\mbox{\rm\scriptsize{#1}}}}
\numberwithin{equation}{section}
\title[Mean-convex Alexandrov embedded surfaces in $\Sp^3$]
{On mean-convex Alexandrov embedded surfaces in the 3-sphere}
\author{L. Hauswirth}
\address{\tiny L. Hauswirth, Universit\'e de Marne la Vall\'ee, France.}
\email{\tiny hauswirth@univ-mlv.fr}
\author{M. Kilian}
\address{\tiny M. Kilian, University College Cork, Ireland.}
\email{\tiny m.kilian@ucc.ie}
\author{M. U. Schmidt}
\address{\tiny M. U. Schmidt, Universit\"at Mannheim, Germany.}
\email{\tiny schmidt@math.uni-mannheim.de}
\begin{document}

\thanks{{\it Mathematics Subject Classification.} 53A10, 37K10. \today}


\begin{abstract}
We consider mean-convex Alexandrov embedded surfaces in the round unit 3-sphere, and show under which conditions it is possible to continuously deform these preserving mean-convex Alexandrov embeddedness.
\end{abstract}
\maketitle


\section{Introduction}
In this paper we consider a class of complete immersed surfaces $M \hookrightarrow \Sp^3$ in the round unit 3-sphere $\Sp^3$. The additional property we assume is that these surfaces are mean-convex Alexandrov embedded. This condition means that the immersion $M \hookrightarrow \Sp^3$ extends to an immersion $N \hookrightarrow \Sp^3$ of a 3-manifold $N$, whose boundary $\partial N = M$, and such that $N$ lies on the mean-convex side of $M$. In the literature there are the notions of Alexandrov embeddings for compact domains on the one hand, and the concept of properly Alexandrov embedded immersions from open manifolds into open Riemannian manifolds on the other hand. Since we are interested in immersions of open manifolds into the compact Riemannian manifold $\Sp^3$, we propose the following
\begin{definition}\label{mean-convex}
A mean-convex Alexandrov embedded surface in $\Sp^3$ is a smooth complete immersion $\imm:M \to \Sp^3$ from a connected surface $M$ which extends as an immersion to a connected 3-manifold $N$
with boundary $M=\partial N$ with the following properties:

{\rm{(i)}} The mean curvature of $M$ in $\Sp^3$ with respect to the
  inward normal is non-negative everywhere.

{\rm{(ii)}} The manifold $N$ is complete with respect to the metric
  induced by $\imm$.

An immersion $\imm:M\to\Sp^3$ just obeying condition~{\rm{(ii)}} is called
an Alexandrov embedding.
\end{definition}
Preserving the property of mean-convex Alexandrov embeddedness during continuous deformations is subtle when the surface is not compact. In general one gets continuity only with respect to the topology defined by uniform distances on compact subsets, and this not necessarily preserves embeddness or Alexandrov embeddness of non-compact surfaces. We shall see that a mean-convex Alexandrov embedded surface permits a deformation preserving Alexandrov embeddedness, if firstly the surface bounds a collar of width bounded uniformly from below and if secondly the surface obeys a chord-arc-bound. Furthermore, we show that for a parabolic surface $M$ any mean-convex Alexandrov embedding $\imm : M \to \Sp^3$ with constant mean curvature fulfills these two conditions, if it has {\emph{uniformly bounded geometry}}. By this we mean:

{\rm{(a)}} The principal curvatures $\kappa_1,\,\kappa_2$ of $\imm$ are uniformly bounded by  \begin{equation} \label{eq:bounded_curvature}
        \max(|\kappa_1|, |\kappa_2|) \leq \kappa_{\max}\,.
    \end{equation}
{\rm{(b)}} There is a constant $\ubdh$ which bounds the covariant derivative of the second fundamental form $\II$ of the immersion, so that for all $p\in M$ and $X,\,Y,\,Z \in T_p M$ we have
\begin{equation}\label{eq:upper bound derivative of h}
|(\nabla_X \II)(Y,Z)| \leq \ubdh\,|X|\,|Y|\,|Z|\,.
\end{equation}
Condition (a) is sufficient to guarantee the existence of a global positive lower bound of the cut-locus function for \cmc mean-convex Alexandrov embedded parabolic surfaces (Proposition \ref{th:Rosenberg_Lemma}). This uniform lower bound $c>0$ of the cut-locus function is obtained with Hopf's maximum principle at infinity, and in turn provides us with a collar of uniform width. Condition (b) and the lower bound on the cut-locus are needed to prove a chord-arc bound (Proposition~\ref{thm:ec diameter bound}), which relates distances in $M$ to distances in $N$.
In order to preserve mean-convex Alexandrov embeddedness during continuous deformations, we localize the concept, and divide the surface into compact mean-convex Alexandrov embedded pieces. We show first that along continuous $\mathrm{C}^1$-deformations the compact pieces stay mean-convex Alexandrov embedded (Proposition~\ref{thm:collar deformation 1}). We then prove that these local mean-convex Alexandrov embedded pieces can be glued together into a global mean-convex Alexandrov embedding (Propsition~\ref{thm:collar deformation 2}). We combine our results into the following

{\bf Theorem.} {\it
Let $\imm :M\to\Sp^3$ be an immersion with non-negative mean curvature and principal curvatures bounded by $\kappa\ind{max}=\cot(\lbcl)$. Suppose that for each $p\in M$ a mean-convex Alexandrov embedding $\imm_p : M\to\Sp^3$ with cut-locus function bounded from below by $\lbcl>0$ and second fundamental form obeying \eqref{eq:upper bound derivative of h} with $\ubdh>0$ is on $B(p,R)\subset M$ close to $\imm$ in the sense that
\begin{align*}
    \|\,\imm-\imm_p \,\|_{\mathrm{C}^1(B(p,R))}&<\epsilon
\end{align*}
for $\epsilon,R>0$ depending on $\lbcl$ and $\ubdh$. Then $\imm$ extends to a mean-convex Alexandrov embedding.}

As an application of our result we note that surfaces in $\Sp^3$ with constant mean curvature $H$ are not isolated, but come in deformation families with varying $H$. These deformations come from the integration of bounded Jacobi fields on the surface. We consider not only one immersed \cmc surface $\imm: M \to \Sp^3$ but a whole deformation family $\{\imm_t \}$. For bounded Jacobi fields one can show that if one member of a family is mean-convex Alexandrov embedded with cut-locus function bounded from below by $c>0$, then every member of the family is mean-convex Alexandrov embedded, and the cut locus functions of the whole family is uniformly bounded from below by $c>0$. When deforming a mean-convex Alexandrov embedded surface $\imm$ of uniformly bounded geometry by integration of a non-bounded Jacobi field, then we need to control the geometry at infinity of surfaces $\imm_t$ which are close to $\imm$ on compact subsets. Our Theorem ensures that if the geometry at infinity of $\imm_t$ is uniformly close to $\imm$ on all balls of fixed radius, then $\imm_t$ is also mean-convex Alexandrov embedded. Such an application might be considered in future work.

\section{Lower bound on the cut locus function}
\label{sec:AE space}
\subsection{Inward $M$-geodesics} In the setting of Definition~\ref{mean-convex}, a fixed orientation of $\Sp^3$ induces on $N$ and $M=\partial N$ an orientation. Conversely, if $M$ is endowed with an orientation, then there exists a unique normal, which points inward to the side of $M$ in $\Sp^3$, which induces on the boundary $M$ the given orientation of $M$. In this sense the orientation of $M$ determines the inward normal of $N$. For each point $p \in M$ of a hypersurface of a Riemannian manifold $N$ there exists a unique arc-length parameterised geodesic $\gamma(p,\cdot)$ emanating from $p = \gamma(p,0)$ and going in the direction of the inward normal at $p$. Such geodesics are called inward $M$-geodesics.

\subsection{Cut locus function} Let $\gamma(p,\cdot)$ be an inward $M$-geodesic. Points $q \in N$ in the ambient manifold that are `close to one side' of $M$ can thus be uniquely parameterised by $(p,\,t)$ where $p \in M$ and $q=\gamma(p,t)$ for some inward $M$-geodesic $\gamma(p,\cdot)$ and some $t \in \R_0^+$. The value of $t$ is the geodesic distance of $q$ to $M$. Extending the geodesic further into $N$ it might eventually encounter a point past which $\gamma(p,t)$ has distance to $M$ smaller than $t$. Such a point is called a cut point. The cut locus of $M$ in $N$ consists of the set of cut points along all inward $M$-geodesics. We define the cut locus function as the geodesic distance of the cut point to $M$:
\begin{equation}\label{eq:cutlocus}
  c:M\to\R^+,\quad p\mapsto c(p),\quad\mbox{ such that }
\gamma(p,c(p))\mbox{ is the cut point.}
\end{equation}
If we want to emphasise the dependence on $\imm$ we write $\gamma_\imm$ and $c_\imm$.

\subsection{First focal point} Recall the following characterization of cut points (see e.g. \cite[Lemma~2.1]{Heb}).
\begin{lemma}\label{th:cut-point}  A cut point is either the first focal point on an inward $M$-geodesic, or it is the intersection point of two shortest inward $M$-geodesics of equal length.
\end{lemma}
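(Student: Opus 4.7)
The plan is to argue by contradiction on the second alternative: assuming that $\gamma(p,\cdot)|_{[0,c(p)]}$ is the \emph{unique} shortest inward $M$-geodesic from $M$ to the cut point $q=\gamma(p,c(p))$, I will show that $q$ must then be the first focal point along $\gamma(p,\cdot)$. The central object is the normal exponential map $\Phi:(p,t)\mapsto\gamma(p,t)$ on the inward normal bundle of $M$; recall that $q$ is by definition a focal point precisely when $\Phi$ fails to be a local diffeomorphism at $(p,c(p))$.

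First I would pick a sequence $t_n\downarrow c(p)$. By the defining property of the cut locus, $t_n>c(p)$ gives $\dist(\gamma(p,t_n),M)<t_n$, so there exist $p_n\in M$ and $\ell_n<t_n$ with $\Phi(p_n,\ell_n)=\gamma(p,t_n)$ and $\ell_n=\dist(\gamma(p,t_n),M)$. The step requiring the most care is extracting a subsequential limit of the minimising geodesics $\gamma(p_n,\cdot)|_{[0,\ell_n]}$: since $N$ is complete by Definition~\ref{mean-convex}(ii), a sufficiently small closed ball around $q$ is compact, and the family is equi-Lipschitz with uniformly bounded lengths $\ell_n\leq c(p)+1$; an Arzel\`a--Ascoli argument then produces a shortest inward $M$-geodesic $\gamma(p^*,\cdot)|_{[0,c(p)]}$ from some $p^*\in M$ to $q$. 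The uniqueness assumption forces $p^*=p$, and hence $(p_n,\ell_n)\to(p,c(p))$ in the normal bundle.

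Now suppose for contradiction that $\Phi$ were non-singular at $(p,c(p))$. Then by the inverse function theorem $\Phi$ is a local diffeomorphism on some neighbourhood $U$ of $(p,c(p))$ in the normal bundle. For $n$ large, both $(p,t_n)$ and $(p_n,\ell_n)$ lie in $U$ and are distinct (since $\ell_n<t_n$), yet $\Phi(p,t_n)=\Phi(p_n,\ell_n)=\gamma(p,t_n)$, contradicting injectivity on $U$. Hence $\Phi$ is singular at $(p,c(p))$, so $q$ is a focal point of $M$ along $\gamma(p,\cdot)$.

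It remains to see that $q$ is the \emph{first} focal point. If $\gamma(p,s_0)$ were focal for some $s_0<c(p)$, the standard second-variation argument (using an $M$-Jacobi field vanishing at $s_0$) yields, for any sufficiently small $\epsilon>0$, a variation of $\gamma(p,\cdot)|_{[0,s_0+\epsilon]}$ through $M$-perpendicular curves of strictly smaller length ending at $\gamma(p,s_0+\epsilon)$. This contradicts $\dist(\gamma(p,s_0+\epsilon),M)=s_0+\epsilon$, which holds because $s_0+\epsilon<c(p)$. Hence no focal point can occur before $c(p)$, completing the dichotomy.
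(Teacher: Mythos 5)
The paper does not supply its own proof of this lemma; it is quoted from Hebda~\cite[Lemma~2.1]{Heb}, so there is no in-paper argument to compare against. Your proof is the standard self-contained argument for this dichotomy — Klingenberg's reasoning for cut points, transported to the normal exponential map $\Phi$ of a hypersurface — and it is correct. A few points you use implicitly and could make explicit: the extension of $\gamma(p,\cdot)$ slightly past $c(p)$ is legitimate because $\gamma(p,c(p))$ lies at distance $c(p)>0$ from $\partial N=M$, hence in the interior, where the geodesic ODE continues; a shortest path from $\gamma(p,t_n)$ to $M$ cannot touch $M$ at an intermediate time (otherwise the touch point would realize a strictly smaller distance), so each such path really is an inward $M$-geodesic and the limit curve stays interior except at its foot point; compactness of closed balls, which you need for Arzel\`a--Ascoli, follows from completeness of $N$ in Definition~\ref{mean-convex}(ii) via the Hopf--Rinow theorem for complete locally compact length spaces, precisely what the paper's reference to Rinow~\cite{Rin} supplies; and in the final step the broken field obtained by extending the $M$-Jacobi field $J$ by zero past $s_0$ only makes the index form vanish, so one must add the usual small perturbation supported near $s_0$ to obtain a strictly negative second variation and hence a strictly shorter nearby $M$-perpendicular curve. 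None of these is a genuine gap; the structure and conclusion of your argument are sound.
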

The first focal point on an inward $M$-geodesic is denoted by
\[
    \gamma(p,t\ind{foc})\,.
\]
For mean-convex $M$ the first focal point on an inward $M$-geodesic has geodesic distance at most $\tfrac{\pi}{2}$, so that $t\ind{foc} \leq \tfrac{\pi}{2}$. Hence also the cut locus function of a mean-convex Alexandrov embedding is uniformly bounded from above by $\frac{\pi}{2}$, since $c \leq t\ind{foc}$. This yields the following
\begin{corollary} The cut-locus function of a mean-convex Alexandrov embedded surface in $\Sp^3$ is uniformly bounded above by $\tfrac{\pi}{2}$.
\end{corollary}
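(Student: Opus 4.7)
The plan is to combine Lemma~\ref{th:cut-point} with the standard explicit Jacobi-field calculation in the round $3$-sphere. By Lemma~\ref{th:cut-point}, every cut point along an inward $M$-geodesic $\gamma(p,\cdot)$ is either the first focal point or the intersection of two shortest inward $M$-geodesics of equal length; in either case the cut distance $c(p)$ is bounded above by the first focal distance $t\ind{foc}(p)$. Hence it suffices to verify $t\ind{foc}(p)\leq \pi/2$ for every $p\in M$.

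To do this, I would analyse a Jacobi field $J(t)=f(t)E(t)$ along $\gamma(p,\cdot)$, produced by the normal exponential variation in the direction of a principal vector $X\in T_pM$ with principal curvature $\kappa$ computed with respect to the inward normal, and with $E$ the unit parallel extension of $X$ along $\gamma(p,\cdot)$. Since $\Sp^3$ has constant sectional curvature $1$, the Jacobi equation reduces to $f''+f=0$, with initial data $f(0)=1$, $f'(0)=-\kappa$ dictated by the shape-operator sign convention for the inward normal. The explicit solution is $f(t)=\cos t-\kappa\sin t$, which first vanishes at $t=\arccot(\kappa)\in (0,\pi)$; this is exactly the first focal time in the direction of $X$.

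The mean-convex hypothesis in Definition~\ref{mean-convex} forces $\kappa_1+\kappa_2\geq 0$, so the larger principal curvature $\kappa\ind{max}:=\max(\kappa_1,\kappa_2)$ is non-negative at every point. Applying the Jacobi-field calculation to the principal direction realising $\kappa\ind{max}$ produces a focal point at distance $\arccot(\kappa\ind{max})\leq \pi/2$, so that $t\ind{foc}(p)\leq \pi/2$. Combining with the first paragraph yields $c(p)\leq t\ind{foc}(p)\leq \pi/2$ uniformly in $p\in M$.

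I do not foresee any genuine obstacle: the whole argument is one invocation of Lemma~\ref{th:cut-point} together with a two-line Jacobi-field computation in constant curvature. The only point requiring mild care is the sign convention, namely that the shape operator entering $f'(0)=-\kappa$ be the one associated with the \emph{inward} normal, which is precisely the side with respect to which mean-convexity is assumed in Definition~\ref{mean-convex}.
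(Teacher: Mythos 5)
Your proposal is correct and follows essentially the same route as the paper: the paper likewise bounds $c(p)\leq t\ind{foc}(p)$ and uses that mean-convexity forces the larger principal curvature with respect to the inward normal to be non-negative, so the first focal distance $\arctan(\kappa\ind{max}^{-1})=\arccot(\kappa\ind{max})$ is at most $\tfrac{\pi}{2}$. You merely make explicit, via the Jacobi-field computation $f(t)=\cos t-\kappa\sin t$, the focal-distance formula that the paper asserts without proof.
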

\subsection{Generalized cylinder coordinates}
For a mean-convex Alexandrov embedded surface $\imm:M \to \Sp^3$, the inward $M$-geodesics give us a parametrisation of the 3-manifold $N$ with boundary $M = \partial N$, which we call generalised cylinder coordinates:
\begin{equation}\label{eq:cylinder}
  \gamma_{\imm} : \{(p,t)\in M\times\R\mid 0 \leq t < c_{\imm}(p)\}
  \to N\,.
\end{equation}
These coordinates define a diffeomorphism onto the complement of the cut locus. The cut locus is homeomorphic to the quotient space $M/\sim_{\imm}$ with the following equivalence relation on $M$:
\begin{align*}
    p&\sim_{\imm} q&&\Longleftrightarrow&
    c_{\imm}(p)&=c_{\imm}(q)\quad\mbox{ and }&
    \gamma_{\imm}(p,c_{\imm}(p))&=\gamma_{\imm}(q,c_{\imm}(q))\\
    &&&\Longleftrightarrow&&&\gamma_{\imm}(p,c_{\imm}(p))&=
    \gamma_{\imm}(q,c_{\imm}(q)).
\end{align*}
For each $p\in M$ we denote the corresponding equivalence class by
\begin{align}\label{eq:ec cutlocus}
    [p]_{\imm}&=\{q\in M\mid \gamma_{\imm}(p,c_{\imm}(p))=
    \gamma_{\imm}(q,c_{\imm}(q))\}.
\end{align}
\begin{proposition}\label{th:Rosenberg_Lemma}
Let $\imm:M \to \Sp^3$ be a mean-convex Alexandrov embedding with constant mean curvature and principal curvatures bounded by $\kappa\ind{max}>0$ and assume that the immersion is conformally parabolic. Then the cut locus function is bounded from below by $\arctan(\kappa\ind{max}^{-1})$.
\end{proposition}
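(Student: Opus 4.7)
My plan is to apply Lemma~\ref{th:cut-point}, according to which the cut point $\gamma_\imm(p, c(p))$ is either (i) the first focal point along the inward $M$-geodesic from $p$, or (ii) the common endpoint of two length-equal minimising inward $M$-geodesics. Writing $r_\star := \arctan(\kappa\ind{max}^{-1}) = \arccot(\kappa\ind{max})$, I must show $c(p) \geq r_\star$ in both cases.

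Case (i) is purely local, handled by the Riccati equation for the shape operator along the inward normal geodesic flow. In a parallel frame diagonalising $S(0)$, each principal curvature satisfies $\dot\kappa = -\kappa^2 - 1$ (the $-1$ coming from $\Sp^3$ having sectional curvature $+1$), so $\kappa(s) = \cot(\arccot(\kappa(0)) - s)$. The first focal time, where a Jacobi field vanishes and $\kappa$ blows up, is $\min_i \arccot(\kappa_i(0))$, which by the hypothesis $\kappa_i(0) \leq \kappa\ind{max}$ is at least $\arccot(\kappa\ind{max}) = r_\star$. So case~(i) delivers $c(p) \geq r_\star$ with nothing further to prove.

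Case (ii) I argue by contradiction. Suppose $c_0 := \inf_{p \in M} c(p) < r_\star$ and choose $p_n \in M$ with $c(p_n) \to c_0$; case~(i) being already covered, for large $n$ there exist distinct $q_n \in M$ with $\gamma_\imm(p_n, t_n) = \gamma_\imm(q_n, t_n) =: \zeta_n$, where $t_n := c(p_n) = c(q_n) \to c_0$. The geodesic ball $B_n \subset N$ of radius $t_n$ about $\zeta_n$ lies in $N$ because $d_N(\zeta_n, M) = t_n$; its totally umbilic bounding sphere $\partial B_n$ has principal curvature $\cot(t_n) > \cot(r_\star) = \kappa\ind{max}$; and it is tangent to $M$ at both $p_n$ and $q_n$ with matching inward normals. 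At each tangency the pointwise Hopf comparison only yields consistency (since $\cot(t_n) > \kappa_i$, the sphere is strictly more convex than $M$ and therefore locally lies strictly inside $N$, compatible with $B_n \subset N$), so the contradiction has to come globally.

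The global step uses the conformal parabolicity of $\imm$ together with Hopf's maximum principle at infinity. The uniform bound \eqref{eq:bounded_curvature} gives uniform control of the local geometry of $\imm$; after composing with ambient isometries of $\Sp^3$ sending each $p_n$ to a fixed basepoint, a standard $C^{1,\alpha}$-compactness argument extracts a subsequential limit complete parabolic CMC immersion $\imm_\infty : M_\infty \to \Sp^3$ together with a limit sphere $\partial B_\infty$ of radius $c_0$ tangent to $\imm_\infty(M_\infty)$ from the mean-convex side at two points $p_\infty, q_\infty$. The non-negative signed distance $u$ of $\imm_\infty(M_\infty)$ from $\partial B_\infty$, pulled back to $M_\infty$, satisfies a linear elliptic differential inequality with $u(p_\infty) = u(q_\infty) = 0$; Hopf's maximum principle at infinity on the parabolic surface $M_\infty$ then propagates the infinitesimal coincidence at $p_\infty$ to force $u \equiv 0$, hence $\imm_\infty(M_\infty) = \partial B_\infty$, contradicting the non-compact parabolicity of $M_\infty$. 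The main obstacle is exactly this last step: packaging the limit configuration so that the maximum principle at infinity applies to our pair of differing-mean-curvature surfaces, and then exploiting parabolicity to upgrade local tangential contact to a global sphere identification; without parabolicity, $M_\infty$ could escape to infinity before the two-point tangency has any chance of forcing rigidity.
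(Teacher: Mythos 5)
Your focal-point case (i) is correct and matches the paper's use of the formula \eqref{eq:mean curvature}. The gap is in case (ii). The inscribed geodesic ball $B_n$ about the cut point is tangent to $M$ from the side into which the inward normal points, and its boundary sphere has principal curvature $\cot(t_n)>\kappa_{\max}$, i.e.\ strictly larger than every principal curvature of $M$. As you yourself observe, this is the \emph{consistent} ordering, and no maximum principle --- pointwise or ``at infinity'' --- can then force the signed distance $u\ge 0$ to vanish identically: the difference of the two mean curvature operators contributes an inhomogeneous term of the harmless sign (roughly $\cot(c_0)-H>0$), so $u$ does not satisfy the kind of differential inequality to which Hopf's lemma or a half-space theorem applies. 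A strictly more curved sphere tangent from inside a mean-convex region is a perfectly stable configuration (a small geodesic sphere tangent to a totally geodesic $\Sp^2$ already realizes it, even with two tangency points in a slab-like region), so there is nothing to contradict. Moreover, if your step did yield $u\equiv 0$, the limit surface would coincide with a sphere of principal curvature $\cot(c_0)>\kappa_{\max}$, violating the very curvature bound assumed --- a sign that the mechanism cannot be repaired in this form. A further unjustified point is the assertion that the blow-up limit $\imm_\infty$ is a \emph{complete parabolic} immersion: parabolicity does not pass to such limits, and Mazet's theorem \cite{Maz} is not applicable to a limit surface produced this way.

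What the paper does instead is compare the two sheets of $M$ with \emph{each other} after pushing them inward. The parallel surfaces $M_t$ have mean curvature \eqref{eq:mean curvature}, positive and strictly increasing for $0<t<\arctan(\kappa_{\max}^{-1})$. Along a minimizing sequence $p_k$ one extracts limit normal constant mean curvature graphs $U$ (around the $p_k$) and $V$ (around the partners $q_k\in[p_k]_{\imm}$), and shifts both by $c_0$ along the inward normal geodesics; the shifted surfaces have the same positive mean curvature $H(c_0)$ and touch at the limit of the cut points with \emph{opposite} mean curvature vectors. That is the configuration where Hopf's maximum principle bites, forcing $H(c_0)=0$, hence $c_0=0$ and $H=0$. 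The residual minimal case is then excluded by the maximum principle at infinity of \cite{Maz} applied to the original parabolic slices $M\times\{t\}$ of the generalized cylinder coordinates --- which is where the parabolicity hypothesis on $M$ itself (not on a limit) actually enters. Your proposal is missing precisely this shifted-sheets comparison, and the contradiction cannot be extracted from the inscribed-sphere tangency you set up.
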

\begin{proof}
For the hypersurface $M_t = \cup_{p\in M} \gamma(p,t)$, the mean curvature
\begin{equation}\label{eq:mean curvature}
  H(t) = \tfrac{1}{2}\left(
  \cot\left(\arctan(\kappa_1^{-1})-t\right) +
  \cot\left(\arctan(\kappa_2^{-1})-t\right) \right)
\end{equation}
is positive for all $t \in (0,\,t\ind{foc})$, and strictly increasing since
$$
    H'(t) = \tfrac{1}{2}\left(\sin^{-2}(\arctan(\kappa_1^{-1})-t) +
    \sin^{-2}(\arctan(\kappa_2^{-1})-t)\right) > 0\,.
$$
Let $c_{\imm}$ denote the cut locus function \eqref{eq:cutlocus}. If there exists a point $p \in M$ for which $c_{\imm}(p) < \arctan(\kappa\ind{max}^{-1})\leq t\ind{foc}$, then two inward $M$-geodesics $\gamma(p,\cdot),\,\gamma(q,\cdot)$ through $p,\,q \in M$ respectively, have to intersect at a distance of $c_{\imm}(p)$ from $M$, and thus $c_{\imm}(p) = c_{\imm}(q)$. Hence, if there exists a point $p \in M$ with $c_{\imm}(p) <\arctan(\kappa\ind{max}^{-1})$ then $M_t$ intersects itself for a value of $t < \arctan(\kappa\ind{max}^{-1})$ over two points $p,\,q \in M$. Let
$$
    \lbcl_0 = \inf\{t\,\left| \right. M_t \mbox{ intersects over two
    points of } M\}.
$$
Since over all points $p \in M$ the mean curvature of $M_t$ is positive for all $0<t<\arctan(\kappa\ind{max}^{-1})$ with respect to the inner normals, the surfaces $M_t$ cannot intersect themselves with opposite sign of the mean curvature vector over two points of $M$ for $0<t<\arctan(\kappa\ind{max}^{-1})$. This implies that the cut locus function has no local minima less than $\arctan(\kappa\ind{max}^{-1})$.

Now let $(p_k)_{k\in\N}$ be a sequence in $M$ with
$$
    \lim_{k \to\infty}c_{\imm}(p_k) =\lbcl_0 = \inf\left\{c_{\imm}(p) \mid p \in
    M \right\}\,.
$$
Then there exists a sequence $\Theta_k$ of isometries of $\Sp^3$ which transform each point $p_k$ into a fixed reference point $p_0\in\Sp^3$, and the tangent plane of $M$ at $p_k$ into the tangent plane of a fixed geodesic sphere $\Sp^2_{p_0}\subset \Sp^3$ which contains $p_0$. The immersion $\imm$ is locally a normal graph over balls in $\Sp^2$ of radius $r>0$ depending only on $\kappa\ind{max}$. Therefore this sequence of isometries transforms neighbourhoods $U_k$ of $p_k\in M$ into normal {\sc{cmc}} graphs $\Theta_k[U_k]$ over $B(p_0,r)\subset\Sp^2_{p_0}$. Due to Arzel\`{a}-Ascoli, and the a-priori gradient bound from Proposition~4.1 in \cite{ForLR}, this bounded sequence of normal {\sc{cmc}} graphs over $B(p_0,r) \subset \Sp^2_{p_0}$ has a convergent subsequence. By passing to a subsequence we may achieve that these graphs converge to a normal {\sc{cmc}} graph $U$ over $B(p_0,r) \subset \Sp^2_{p_0}$, which is tangent to $\Sp^2_{p_0}$ at $p_0$. For $\lbcl_0<\arctan(\kappa\ind{max}^{-1})$ the sets $[p_k]_{\imm}$ contain besides $p_k$ another point $q_k$ for large $k$. Furthermore, the sequence of isometries $\Theta_k$ transforms the sequence of geodesic 2-spheres tangent to $M$ at $q_k$ into a converging sequence of spheres with limit $\Sp^2_{q_0}$. This sphere contains the limit $q_0=\lim\Theta_k(q_k)$ with distance $\mathrm{dist}(p_0,q_0)\le 2\lbcl_0$. For large $k$ the points $q_k$ have neighbourhoods $V_k$ such that $\Theta_k[V_k]$ are normal {\sc{cmc}} graphs over $B(q_0,r)\subset\Sp^2_{q_0}$. By passing again to a subsequence the normal {\sc{cmc}} graphs $\Theta_k[V_k]$ converge to a normal {\sc{cmc}} graph $V$ tangent to $\Sp^2_{q_0}$ at $q_0$. The transformed inward $M$-geodesics nearby $p_k$ and $q_k$ converge to normal geodesics of these two limiting {\sc{cmc}} surfaces $U$ and $V$ in $\Sp^3$. Let $U_{\lbcl_0}$ and $V_{\lbcl_0}$ denote the surfaces $U$ and $V$ shifted by $\lbcl_0$ along these normal geodesics. If we shift both sequences $\Theta_k[U_k]$ and $\Theta_k[V_k]$ by $\lbcl_0$ along the transformed $M$-geodesics, they converge to $U_{\lbcl_0}$ and $V_{\lbcl_0}$. Therefore these surfaces $U_{\lbcl_0}$ and $V_{\lbcl_0}$ touch each other at a first point of contact with opposite sign of the mean curvature at the limit of the transformed cut points $\lim\Theta_k(\gamma_{\imm}(p_k,c(p_k)))= \lim\Theta_k(\gamma_{\imm}(q_k,c(q_k)))$, contradicting Hopf's maximum principle. Hence the shifted surfaces cannot have positive mean curvature with respect to the inner normal and this implies $\lbcl_0=0$ and $H=0$. This however contradicts the maximum principle at infinity \cite[Theorem~7]{Maz} as follows: The generalised cylinder coordinates~\eqref{eq:cylinder} define an immersion from the 3-manifold $(x,t)\in M\times[0,\arctan(\kappa_{\max}^{-1})]$ with boundary components $M\times\{0\}$ and $M\times\{\arctan(\kappa_{\max}^{-1})\}$. Each surface $M \times \{t\}$ has uniform bounded curvature, parabolic conformal type and are mean-convex with mean curvature $H>0$ pointing away from the minimal surface $M \times \{0\}$. If we assume the infimum $c>0$ of the cut locus function smaller than $\arctan(\kappa_{\max}^{-1})$, some pieces of $M \times \{c\}$ sit inside this 3-manifold with possible boundaries in the second boundary component $M\times\{\arctan(\kappa_{\max}^{-1})\}$. All assumptions of \cite[Theorem~7]{Maz} are fulfilled, and these pieces contradict the maximum principle since the mean curvature vector points into $M \times \{0\}$, giving the contradiction.
\end{proof}
\section{Chord-arc bound}\label{sec:chord arc}
Let $\imm:M\to\Sp^3$ be a mean-convex Alexandrov embedding with cut locus function $c_{\imm}(p)$ \eqref{eq:cutlocus} bounded from below by $\lbcl>0$. The generalized cylinder coordinates \eqref{eq:cylinder} define a diffeomorphism $\gamma_{\imm}$ of $M\times[0,\lbcl)$ onto an open subset of $N$, which is a collar. Due to Lemma \ref{th:cut-point} for all $p\in M$ the first focal point $\gamma(p,t\ind{foc})$ has distance $t\ind{foc}=\arctan\left((\max\{\kappa_1,\,\kappa_2\})^{-1}\right)\ge c_\imm(p)\ge\lbcl$ to $M$. Hence both principal curvatures are uniformly bounded by $\kappa\ind{max}=\cot(\lbcl)$. Consequently, due to the formula~\eqref{eq:mean curvature}, the distances to the first focal points on the outward $M$-geodesics are not smaller than the distances to the first focal points on the inward $M$-geodesic which are at least $\lbcl$. Hence the normal variation defines an immersion of $M\times(-\lbcl,\lbcl)$ into $\Sp^3$. In particular, the induced metric makes $M\times[-\frac{\lbcl}{2},\lbcl)$ into a Riemannian manifold with constant sectional curvature equal to one and with boundary $M\times\{-\frac{\lbcl}{2}\}$. For all elements of this manifold the cylinder coordinates, that is the distances to $M\simeq M\times\{0\}$ and the nearest point in $M$ are uniquely defined. Hence we can glue $M\times[-\frac{\lbcl}{2},\lbcl)$ along $\gamma_{\imm}(M\times[0,\lbcl))$ to $N$, and obtain a larger complete 3-manifold $\widehat{N}\supset N$ with boundary $\partial\widehat{N}=M\times\{-\frac{\lbcl}{2}\}$. The generalised cylinder coordinates \eqref{eq:cylinder} extend to an embedding $\hat{\gamma}_{\imm}:M\times[-\frac{\lbcl}{2},\lbcl)\hookrightarrow\widehat{N}$ and $\imm$ extends to an immersion $\hat{\imm}:\widehat{N}\to\Sp^3$.

Let $\dist_M$, $\dist_N$ and $\dist_{\widehat{N}}$ denote the distance functions of the complete Riemannian manifolds $M\subset N\subset\widehat{N}$. For $p,q\in M$ we have the obvious inequalities
\[
    \dist_{\widehat{N}}(p,q)\le\dist_N(p,q)\le\dist_M(p,q)\,.
\]
Therefore the following Proposition implies the chord-arc bound
\begin{equation}\label{eq:chord arc 0}
    \dist_N(p,q)\leq\dist_M(p,q)\le\cab\dist_{\widehat{N}}(p,q)\leq\cab\dist_N(p,q)\quad
    \mbox{ for all }p,q\in M.
\end{equation}
\begin{proposition}\label{thm:ec diameter bound}
Let $\imm:M\to\Sp^3$ be a mean-convex Alexandrov embedding with second fundamental form $\II$ with respect to the inner normal $\normal$. If $\lbcl>0$ is a lower bound on the cut locus function $c_{\imm} \geq c>0$ \eqref{eq:cutlocus} and $\ubdh$ a bound on the covariant derivative of $\II$:
\begin{align*}
|(\nabla_X\II)(Y,Z)|&
\leq\ubdh\cdot|X|\cdot|Y|\cdot|Z|&
\text{for all } p\in M&\text{ and }X,Y,Z\in T_pM,
\end{align*}
then there exists a constant $\cab>0$ depending only on $\lbcl$ and $\ubdh$ such that
\begin{equation}\label{eq:chord arc}
    \dist_{\widehat{N}}(p,q)\leq\dist_M(p,q)\leq\cab\dist_{\widehat{N}}(p,q)\quad
    \mbox{ for all }p,q\in M.
\end{equation}
\end{proposition}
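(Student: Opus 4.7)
The left inequality is immediate from the inclusion $M\hookrightarrow\widehat N$, so only the chord-arc bound $\dist_M(p,q)\le\cab\,\dist_{\widehat N}(p,q)$ requires work. The overall strategy is to compare distances through the generalised cylinder coordinates $\hat\gamma_\imm:M\times[-\lbcl/2,\lbcl/2]\hookrightarrow\widehat N$, which provide a collar of uniform width on both sides of $M$.

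First I would obtain a uniform bilipschitz description of this collar. The pulled-back metric is $dt^2+g_t$, and the formula for the first fundamental form of a parallel surface in $\Sp^3$ gives
\[
    g_t(X,Y)=g_M\bigl((\cos t\,\Id-\sin t\,S)X,(\cos t\,\Id-\sin t\,S)Y\bigr),
\]
with $S$ the shape operator of $\imm$. The remarks preceding the proposition bound the principal curvatures by $\cot(\lbcl)$, so the eigenvalues $\cos t-\kappa_i\sin t$ of $\cos t\,\Id-\sin t\,S$ lie between positive constants depending only on $\lbcl$ for $|t|\le\lbcl/2$. Hence $g_t$ is uniformly bilipschitz to $g_M$ and the collar metric is uniformly bilipschitz to the product metric $dt^2+g_M$.

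Since the cut-locus lower bound forces $\{x\in\widehat N:\dist_{\widehat N}(x,M)<\lbcl/2\}\subset U:=\hat\gamma_\imm(M\times(-\lbcl/2,\lbcl/2))$, the nearest-point projection $\pi:U\to M$, $\hat\gamma_\imm(p,t)\mapsto p$, is a well-defined smooth retraction. For $p,q\in M$ with $\dist_{\widehat N}(p,q)<\lbcl/4$, every point of a near-minimising $\widehat N$-path between them lies within distance $\lbcl/4$ of $M$ and hence in $U$; projecting the path by $\pi$ and using the bilipschitz bound from the first step gives a constant $K_0=K_0(\lbcl)$ with $\dist_M(p,q)\le K_0\,\dist_{\widehat N}(p,q)$ whenever $\dist_{\widehat N}(p,q)<\lbcl/4$.

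The main obstacle is the global case, in which a minimising path may travel deeply into $\widehat N$ and leave $U$. My plan is to subdivide a near-minimising path $\alpha:[0,L]\to\widehat N$ into pieces of $\widehat N$-length at most $\lbcl/8$ and build a telescoping sequence $p=p_0,p_1,\dots,p_n=q$ in $M$ with $\dist_M(p_{i-1},p_i)$ bounded by a fixed multiple of the length of the corresponding sub-arc. On sub-arcs lying inside $U$ this reduces to the local chord-arc bound applied to $\pi(\alpha(s_{i-1}))$ and $\pi(\alpha(s_i))$. The delicate case is a maximal sub-arc of $\alpha$ lying outside $U$, with endpoints $x,y\in\hat\gamma_\imm(M\times\{\lbcl/2\})$: here one must replace the sub-arc by a detour in $M$ between $\pi(x)$ and $\pi(y)$ whose length is controlled by the $\widehat N$-distance between $x$ and $y$. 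The bound $\ubdh$ on $\nabla\II$ is expected to enter at this stage, controlling the variation of the shape operators of the intermediate parallel surfaces $M\times\{t\}$, so that the chord-arc estimate can be propagated along the inward normal flow and iterated from the inner boundary back to $M$, producing the global constant $\cab=\cab(\lbcl,\ubdh)$.
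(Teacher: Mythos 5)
Your analysis of the collar case {\bf(A)} is essentially the same as the paper's and is fine: uniformly bounded principal curvatures give a bilipschitz control of the nearest-point projection $\pi$ on $\{|t|\le\lbcl/2\}$, yielding $\dist_M(p,q)\le 2\dist_{\widehat N}(p,q)$ when the whole minimising path stays inside the two-sided collar. But the global strategy you sketch for paths that leave the collar has a genuine gap: it cannot be made to work by projecting along the normal flow.

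The difficulty is that once a sub-arc of the minimising path travels to depth $t>\lbcl/2$, the cylinder-coordinate map $(p,t)\mapsto\gamma_\imm(p,t)$ becomes increasingly degenerate in the $M$-direction: the tangential component of $d\gamma_\imm(p,t)$ has eigenvalues $\cos t-\kappa_i\sin t$, which tend to $0$ as $t\to t\ind{foc}$, so the projection $\pi$ is \emph{not} Lipschitz at those depths and a deep sub-arc of small $\widehat N$-length may project to a path of arbitrarily large $M$-length. Replacing such a sub-arc by a ``detour in $M$ \ldots controlled by the $\widehat N$-distance between $x$ and $y$'' is exactly the chord-arc inequality you are trying to prove, now for the inner level set $M\times\{\lbcl/2\}$, so the reduction is circular; and ``propagating the chord-arc estimate along the inward normal flow'' does not terminate, because the flow only reaches depth $c_\imm(p)$ before the coordinates fail. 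In short, your step for the deep case is not an argument but a restatement of the problem. You also never invoke mean-convexity, which in the paper is the decisive hypothesis for the global case.

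The paper's actual route is quite different. It first shows (using Hopf's maximum principle and geodesic spheres) that any $N$-geodesic of length $\ge\pi$ between boundary points must re-meet $M$, so one may assume the geodesic has length $<\pi$. It then deforms the endpoint pair $(p,q)$ in $M\times M$: in case {\bf(B1)} (geodesic nearly normal to $M$ at both ends) it moves the closed great circle containing the geodesic by a Killing field, and Lemma~\ref{thm:second derivative} shows, using mean-convexity to control the sign of $\II(p',p')$ and $\II(q',q')$ and the bound $\ubdh$ on $\nabla\II$ to keep the estimate valid over a fixed parameter interval $[0,s_0]$, that the chord length decreases at a definite rate; in case {\bf(B2)} (geodesic transverse) it follows the gradient flow of $\dist_{\widehat N}$, where transversality alone gives a uniform decrease. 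Iterating these two moves reduces any geodesic of type {\bf(B)} to finitely many segments of type {\bf(A)} after a uniformly bounded amount of motion in $M$, giving $\cab(\lbcl,\ubdh)$. You should discard the normal-flow propagation idea and instead look for an endpoint-deformation argument of this kind; there is no purely projection-based proof.
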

\begin{proof}
For all $p,q\in M$ we have $\dist_{\widehat{N}}(p,q)\leq\dist_M(p,q)$. We prove the reverse inequality by constructing a path from $p$ to $q$ in $M$ of length at most $\cab\dist_{\widehat{N}}(p,q)$. Due to (Rinow~\cite{Rin}, pages 172 and 141) the points $p$ and $q$ are joined by a shortest path in $\widehat{N}$. If this shortest path passes through $M\subset\widehat{N}$, then we may divide the shortest into several segments and prove~\eqref{eq:chord arc} for each segment separately. We distinguish between the following cases:
\begin{enumerate}
\item[\bf{(A)}]\label{group 1} The shortest path stays inside $\{\hat{\gamma}_{\imm}(p,t)\mid(p,t)\in M\times[-\frac{\lbcl}{2},\frac{\lbcl}{2}]\}$.
\item[\bf{(B)}] The shortest path stays inside $N$ with maximal distance $>\frac{\lbcl}{2}$ to $M$.
\end{enumerate}
In case {\bf{(A)}} the first entries of the cylinder coordinates of the shortest path yields a path in $M$ from $p$ to $q$. We estimate the length of the derivative $d\hat{\gamma}_{\imm}(p,t)(p',t')$ of the generalised cylinder coordinates at $(p,t)\in M\times[-\frac{\lbcl}{2}, \frac{\lbcl}{2}]$ in direction of $(p',t')\in T_{(p,t)}M\times(-\lbcl,\lbcl)$ from below. For this purpose we decompose this derivative in components parallel and orthogonal to the $M$-geodesic. The length of component orthogonal to the $M$-geodesic is a lower bound of the length of the derivative and does not depend on $t'$. The vector field on $\Sp^2$ of rotations around the $z$-axis has length proportional to $\sin$ of the geodesic distance to the poles. Therefore on a geodesic sphere of radius $\lbcl=\arccot(\kappa\ind{max})$ the length of the orthogonal component is equal to
$$|d\hat{\gamma}_{\imm}(p,t)(p',0)|=\tfrac{\sin(\lbcl-t)}{\sin(\lbcl)}|p'|
=(\cos(t)-\cot(\lbcl)\sin(t))|p'|=(\cos(t)-\kappa\ind{max}\sin(t))|p'|.
$$
If both principal curvatures are smaller than $\kappa\ind{max}$, then the orthogonal component is longer. Therefore for $t\le\frac{\lbcl}{2}<\frac{\pi}{2}$ we obtain the lower bound
\begin{align*}
|d\hat{\gamma}_{\imm}(p,t)(p',t')|
\ge\left(\cos(\tfrac{\lbcl}{2})-\cot(\lbcl)\sin(\tfrac{\lbcl}{2})\right)|p'|
=\frac{1}{2\cos(\tfrac{\lbcl}{2})}|p'|&\ge\tfrac{1}{2}|p'|.
\end{align*}
The integral of this inequality along the shortest path yields for all such pairs $(p,q)$ in case~{\bf(A)}
\begin{align}\label{eq:chord arc 1}
\dist_M(p,q)&\le 2\dist_{\widehat{N}}(p,q).
\end{align}
This includes all $(p,q)$ with $\dist_{\widehat{N}}(p,q)<\lbcl$.

In case~{\bf(B)} the shortest path is a geodesic of $N$ with only two boundary points at both ends.

\noindent{\it Claim.} Suppose $p,\,q \in M$ are connected in $N$ by a geodesic of length $\geq\pi$. Then
$$
   \dist_N(p,\tilde{q}) < \pi\quad\mbox{ and }\quad
   \dist_N(p,q)=\dist_N(p,\tilde{q})+\dist_N(\tilde{q},q)
   \quad\mbox{ for some }\quad\tilde{q}\in M.
$$
To prove this claim, note that all geodesics $\gamma$ in $N$ starting at $p\in M$ which do not meet $M$ in distances $d\in(0,\pi)$, meet each other at the antipode of $p$. The tangent space $T_pN$ contains a unique half space of initial directions of geodesics starting at $p\in M$. If the pre-image of $B(p,\pi)\subset N$ with respect to $\exp_p$ contains the intersection of $B(0,\pi)\subset T_pN$ with this half space, then due to Hopf's maximum principle (see e.g.\ \cite{Esc}), $B(p,\pi)\subset M$ is a geodesic sphere in $\Sp^3$ and the claim is obvious. Otherwise there starts at $p$ a geodesic which touches $M$ for some $t\in(0,\pi)$, and the claim follows in this case. This proves the claim.

In the sequel we consider points $p,\,q \in M$ connected by a geodesic in $N$ with $\dist_N(p,q)<\pi$. Along any geodesic in $\widehat{N}$, which starts at some point $p\in M$ the distance to $M$ is bounded by the geodesic distance to $p$. Therefore such geodesics exist in $\widehat{N}$ at least up to distances not larger than $\frac{\lbcl}{2}$ from the initial point $p\in M$. This shows that in $\widehat{N}$ the absolute value of the second coordinate of $\hat{\gamma}_{\imm}$ is the distance to $M$. Let $(p,q)$ be any pair of points in $M$, which are connected in $\widehat{N}$ by a geodesic. Let $\chi_p,\,\chi_q\in [0,\frac{\pi}{2}]$ denote the angles in $T_pN$ and $T_qN$ between the geodesic $\gamma$ connecting $p$ and $q$ and the normal to $M$, respectively.
 For $\epsilon>0$ to be specified in Lemma~\ref{thm:second derivative} we further distinguish between the following two sub cases of case~{\bf(B)}:
\begin{align*}
\mbox{\bf{(B1)} }
(\sin^2(\chi_p)+\sin^2(\chi_q))^{\frac{1}{2}}&\le\epsilon&&\mbox{and}&
\mbox{\bf{(B2)} }
(\sin^2(\chi_p)+\sin^2(\chi_q))^{\frac{1}{2}}&>\epsilon.
\end{align*}
%
%
%
%

In case {\bf{(B1)}} we shall consider smooth families of geodesics $\gamma$ connecting two smooth paths $s\mapsto p(s)$ and $s\mapsto q(s)$ in $M$ parameterised by a real parameter $s$. For fixed $s$ the geodesic is parameterised by the real parameter $t$. The derivatives with respect to $s$ are denoted by prime and the derivatives with respect to $t$ by dot. For example $p'\in T_{p(s)}M$ and $q'\in T_{q(s)}M$ denotes the tangent vectors along the paths $s\mapsto p(s)$ and $s\mapsto q(s)$. The geodesic $\gamma$ extends in $\Sp^3$ to a closed geodesic. For any $(p',q')\in T_pM\times T_qM$ there exists a Killing field $\vartheta$ on $\Sp^3$, which moves the closed geodesic $\gamma$ in such a way, that the intersection points at $p$ and $q$ moves along $p'$ and $q'$, respectively. Conversely, all Killing fields $\vartheta$ generate a one-dimensional group of isometries of $\Sp^3$. Let $s\mapsto\gamma_\vartheta(s,\cdot)$ denote the corresponding family of geodesics and $s\mapsto(p(s),q(s))$ the corresponding intersection points with $M$. To proceed, we need the following Lemma, whose proof we defer to the appendix.
\begin{lemma}\label{thm:second derivative}
There exist $\epsilon,\delta>0$ and $0<s_0<\min\{\frac{\lbcl}{2},\frac{2}{3}\}$ depending only on $\lbcl$ and $\ubdh$ with the following property: Let $p=p(0),\,q=q(0) \in M$ be connected by a geodesic in $N$ obeying~{\bf{(B1)}}. Then there exists a non-trivial Killing field $\vartheta$, such that $d:s\mapsto d(s)=\dist_{\Sp^3}(p(s),q(s))$ obeys
\begin{align}\label{eq:second derivatives}
d'(s)&\leq 0,& d''(s)&\leq-\delta\cos\left(\tfrac{d(s)}{2}\right),&
|p'(s)|+|q'(s)|&\leq 3\cos\left(\tfrac{d(s)}{2}\right)&
\mbox{ for all }&s\in[0,s_0],
\end{align}
with $s_0 >0$ small enough such that $|d(s_0)-d(0)| \leq \frac{c}{2}$.
\end{lemma}
\emph{Continuation of the proof of Proposition~\ref{thm:ec diameter bound}.} For pairs $(p,q)$ connected in $N$ by a geodesic obeying {\bf{(B1)}} there exists by Lemma~\ref{thm:second derivative} a Killing field $\vartheta$ and two paths $s\mapsto p(s)$ and $s\mapsto q(s)$ along which the length $d$ is reduced for $0\leq s\leq s_0$. We consider the function $f(s)= \cos (\frac{d(s)}{2})$ and its derivative $f'(s) = -\frac{d'(s)}{2}\sin (\frac{d(s)}{2})$ with the inequality
$$-d'(s)\leq|p'(s)|+|q'(s)|\leq 3\cos(\tfrac{d(s)}{2})\leq 3\cos(\tfrac{d(s_0)}{2})$$
to derive $\cos(\frac{d(0)}{2})\geq(1-\frac{3}{2}s_0)\cos(\frac{d(s_0)}{2})$. Twice integration of \eqref{eq:second derivatives} implies the following inequality together with the separate inequalities for the numerator and the denominator:
\begin{align}\label{eq:chord arc 2}
\frac{d(0)-d(s_0)}{\dist_M(p(s_0),p(0))+\dist_M(q(s_0),q(0))}&\geq
\frac{\delta\frac{s_0^2}{2}\cos\left(\frac{d(0)}{2}\right)}
     {3s_0\cos\left(\frac{d(s_0)}{2}\right)}
\geq\delta\frac{s_0}{6}\left(1-\tfrac{3}{2}s_0\right).
\end{align}
The constant $s_0$ is chosen so small such that the geodesic connecting $p(s)$ and $q(s)$ stays inside $\widehat{N}$. We divide the geodesic from $p(s_0)$ to $q(s_0)$ in $\widehat{N}$ into segments outside of $N$ and inside of $N$. In this way an application of Lemma~\ref{thm:second derivative} transforms the geodesic from $p$ to $q$ obeying {\bf{(B1)}} into two paths $s\mapsto p(s)$ from $p(0)=p$ to $p(s_0)$ and $s\mapsto q(s)$ from $q(0)=q$ to $q(s_0)$ in $M$ and several geodesic segments in $\widehat{N}$ from $p(s_0)$ to $q(s_0)$ belonging either to case~{\bf{(A)}}, case~{\bf{(B1)}} or case~{\bf{(B2)}}. Due to~\eqref{eq:chord arc 2} the sum of the lengths of the paths in $M$ times $\delta\frac{s_0}{6}(1-\tfrac{3}{2}s_0)$ plus the lengths of the geodesic segments is smaller than the length of the original geodesic.

In case {\bf{(B2)}} we apply the gradient flow of $\dist_{\widehat{N}}$. As long as $p\ne q$ are connected in $\widehat{N}$ by a geodesic, the function $\dist_{\widehat{N}}$ is smooth. The Riemannian metric on $M\times M$ identifies the negative of the gradient of $\dist_{\widehat{N}}(p,q)$ with a unique vector field $(p',q')$. We follow this flow as long as $(\sin^2(\chi_p)+\sin^2(\chi_q))^{\frac{1}{2}}>\epsilon$ holds and the geodesic connecting $p$ and $q$ stays inside $\widehat{N}$. Along the corresponding integral curves $s\mapsto(p(s),q(s))$ we have $|p'|=\sin(\chi_p)$ and $|q'|=\sin(\chi_q)$. Using the Cauchy-Schwarz inequality, $\dist_{\widehat{N}}(p(s),q(s))$ decreases with derivative
\begin{align}\label{eq:third bound}
-\frac{d}{ds}\dist_{\widehat{N}}(p(s),q(s))=
\sin(\chi_p)|p'(s)|+\sin(\chi_q)|q'(s)|
&\ge\frac{\epsilon}{\sqrt{2}}(|p'(s)|+|q'(s)|).
\end{align}
For such $(p,q)$ the vector field $(p',q')$ is smooth with length bounded from above and below. For finite $s=s_0$ either the geodesic shrinks to point and $p$ becomes equal to $q$, or we reach a pair $(p,q)$ connected by a geodesic in $\widehat{N}$ such that either the geodesic touches the boundary of $\widehat{N}$ or $(\sin^2(\chi_p)+\sin^2(\chi_q))^{\frac{1}{2}}\le\epsilon$ holds. We integrate \eqref{eq:third bound} to
\begin{align}\label{eq:chord arc 3}
\dist_M(p(s_0),p(0))+\dist_M(q(s_0),q(0))&\leq\frac{\sqrt{2}}{\epsilon}
\left(\dist_{\widehat{N}}(p(0),q(0))-\dist_{\widehat{N}}(p(s_0),q(s_0))\right).
\end{align}
Again we decompose the final geodesic in geodesic segments belonging either to case~{\bf{(A)}}, case~{\bf{(B1)}} or case~{\bf{(B2)}}. To sum up the application of the gradient flow transforms the geodesic from $p$ to $q$ obeying {\bf{(B2)}} into two paths $s\mapsto p(s)$ from $p(0)=p$ to $p(s_0)$ and $s\mapsto q(s)$ from $q(0)=q$ to $q(s_0)$ in $M$ and several geodesic segments in $\widehat{N}$ from $p(s_0)$ to $q(s_0)$ belonging either to case~{\bf{(A)}}, case~{\bf{(B1)}} or case~{\bf{(B2)}}. Due to~\eqref{eq:chord arc 3} the sum of the lengths of paths in $M$ times  $\frac{\epsilon}{\sqrt{2}}$ plus the lengths of the geodesic segments is smaller than the length of the original geodesic.

We iterate the applications of \eqref{eq:chord arc 1} to pairs $(p,q)$ of case~{\bf{(A)}}, the applications of Lemma~\ref{thm:second derivative} with \eqref{eq:chord arc 2} to pairs $(p,q)$ of case~{\bf{(B1)}} and the applications of the gradient flow with \eqref{eq:chord arc 3} to pairs of case~{\bf{(B2)}}. Any application of Lemma~\ref{thm:second derivative} reduces the length $\dist_{\widehat{N}}(p,q)$ by a number $\ge \tfrac{1}{2}\delta s_0^2\cos(\frac{d}{2})$, where $0<d<\pi$ denotes the length of the original geodesic. Any application of the gradient flow which ends at a pair obeying {\bf{(B1)}} results in an application of Lemma~\ref{thm:second derivative}. All other applications of the gradient flow reduces $\dist_{\widehat{N}}(p,q)$ by a number $\ge\frac{\epsilon\lbcl}{2\sqrt{2}}$. Therefore finitely many applications of Lemma~\ref{thm:second derivative} and the gradient flow transform the geodesic connecting $p$ and $q$ into finitely many paths in $M$, which connect $p$ with $q$ and whose total length is bounded by $\cab\dist_N(p,q)$ with $\cab=\max\{2,\,\frac{12}{\delta(2s_0-3s_0^2)},\,\frac{\sqrt{2}}{\epsilon}\}$.
\end{proof}
We remark that this proof also shows the equivalence of
\eqref{eq:chord arc 0} and \eqref{eq:chord arc} with $\cab\ge 2$ for
mean-convex Alexandrov embeddings $\imm:M\to\Sp^3$ with lower bound
$\lbcl$ on the cut locus function.
%
%
\section{Collar perturbation}
In this section we localize the concept of a mean-convex Alexandrov embedding. We consider in the following open subsets $V$ of $M$ and open bounded 3-dimensional manifolds $W$ with boundary $V$. More precisely, we denote by $\partial W=V$ the boundary as defined within the concept of manifolds with boundary. It is in general a subset of the topological boundary.
\begin{definition}
We call the restriction $\imm|_V$ of $\imm:M\to\Sp^3$ to an open subset $V \subset M$ a local mean-convex Alexandrov embedding if $\imm|_V$ extends as an immersion to an open 3-manifold $W$ with $V = \partial W$ such that the following hold:

{\rm{(i)}} The mean curvature of $V$ in $\Sp^3$ with respect to the inward normal is non-negative everywhere.

{\rm{(ii)}} All inward $V$-geodesics exist in $W$ until they reach
  the cut locus \eqref{eq:cutlocus} (for $t\le\frac{\pi}{2}$).

{\rm{(iii)}} $ W=\{\gamma_{\imm}(p,t) \mid p\in V\mbox{ and }0 < t\leq c_{\imm}(p)\}$.
\end{definition}
If $\imm:M\to\Sp^3$ is a mean-convex Alexandrov embedding which satisfies the chord-arc bound \eqref{eq:chord arc 1}, then all open subsets $V\subset M$ such that for all $p\in V$ the classes $[p]_{\imm} \subset V$ are examples of local mean-convex Alexandrov embeddings. To see that we only have to show that
$$
    W=\{\gamma_{\imm}(p,t)\in N\mid p\in V\mbox{ and }0 < t\leq c_{\imm}(p)\}
$$
is open in $N$. Since $V$ is open, $W$ is open at all points away from the cut locus. Consider a cut point $c_{\imm}(p)$ and a sequence of  $(q_n) \in N$ converging to $c_{\imm}(p)$. We prove that $(q_n) \in W$ for n large enough. Let $q_n = \gamma_{\imm}(p_n,t) \in N$ with $p_n \in M$. By the chord-arc bound the set $[p]_{\imm}$ is bounded in $M$, and there is a subsequence of $p_n$ converging to an element of $[p]_{\imm}\subset V$. This proves that $p_n \in V$ for $n$ large enough and $q_n \in W$. Hence $W$ is an open neighborhood around cut points. Since $W$ is an open subset of $N$ with $V=\partial W$, $\imm_{| V}$ extends naturally as an immersion to $W$ by restriction of the extension of $f$ to $W$.

We shall prove that `mean-convex Alexandrov embeddedness' is an open condition, which will allow us to study deformation families of mean-convex Alexandrov embeddings. The main tool is a general perturbation technique of Alexandrov embeddings, which we call collar perturbation. We consider local perturbations $\tilde{\imm}$ of a given smooth immersion $\imm:M\to\Sp^3$, which are `small' with respect to the $C^1$-topology on the space of immersions from $M$ into $\Sp^3$.
\begin{lemma}\label{thm:collar deformation 1}
For given $\lbcl>0$ and $\ubdh>0$ there exist $\epsilon>0$ and $R>0$ with the following property: Let $\imm:M\to\Sp^3$ be a mean-convex Alexandrov embedding with cut locus function $c_{\imm}$ \eqref{eq:cutlocus} bounded from below by $\lbcl$ and second fundamental form obeying \eqref{eq:upper bound derivative of h}, and let $\tilde{\imm}:M \to\Sp^3$ be an immersion with non-negative mean curvature and principal curvatures bounded by $\kappa\ind{max}=\cot(\lbcl)$. Furthermore, let $p\in M$ be some point and let both immersions $\imm$ and $\tilde{\imm}$ obbey on the ball $B(p,2R)\subset M$ with respect to the metric induced by $\tilde{\imm}$ the following estimate:
\begin{equation}\label{eq:immersion bound}
    \|\,\imm -\tilde{\imm}\,\|_{\mathrm{C}^1(B(p,2R))} < \epsilon
\end{equation}
Then the restriction $\tilde{\imm}|_V$ of $\tilde{\imm}$ to an open neighbourhood $V\subset M$ of $p$ extends to a local mean-convex Alexandrov embedding with $V=\partial W$.
\end{lemma}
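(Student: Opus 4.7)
The goal is to construct a local Alexandrov structure $(V,W)$ for $\tilde{\imm}$ on a small ball $V=B(p,R_1)\subset B(p,R)$ by transferring the collar structure of $\imm$ via the $C^1$-closeness. The constants $\epsilon,R>0$ will be chosen in terms of $\lbcl$ and $\ubdh$. Since the principal curvatures of $\tilde{\imm}$ are bounded by $\kappa\ind{max}=\cot(\lbcl)$, the computation preceding formula~\eqref{eq:mean curvature} implies that the first focal point on every inward $\tilde{\imm}$-geodesic is at distance at least $\lbcl$. Hence the normal exponential
\[
\tilde{\Gamma}(q,t):=\exp^{\Sp^3}_{\tilde{\imm}(q)}(t\,\tilde{N}(q)),
\]
with $\tilde{N}$ the inward normal of $\tilde{\imm}$, is a local immersion on $M\times[0,\lbcl)$.

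The central estimate is a positive lower bound on the local cut locus of $\tilde{\imm}|_V$. Since $c_{\imm}\geq\lbcl$, the extended collar map $\hat{\gamma}_{\imm}:M\times[-\lbcl/2,\lbcl)\hookrightarrow\widehat{N}$ from Section~\ref{sec:chord arc} is an open embedding, so distinct inward $\imm$-geodesics from $B(p,2R)$ do not meet at distance less than $\lbcl$. The bound $\ubdh$ on $|\nabla\II|$ provides the uniform control on the geometry of $\imm$'s collar over $B(p,2R)$ needed to turn this into a quantitative non-intersection estimate. By $\|\imm-\tilde{\imm}\|_{\mathrm{C}^1(B(p,2R))}<\epsilon$, the inward normals $\tilde{N}$ and $N$ agree on $B(p,2R)$ up to $O(\epsilon)$, and integrating the geodesic ODE in $\Sp^3$ shows that $\tilde{\Gamma}$ is uniformly $O(\epsilon)$-close to $\gamma_{\imm}$ on $B(p,2R)\times[0,\lbcl)$. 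Choosing $R_1<R$ and $\epsilon$ sufficiently small (with the smallness quantified in terms of $\lbcl$ and $\ubdh$), distinct inward $\tilde{\imm}$-geodesics from $V:=B(p,R_1)$ do not meet before distance $c_0:=\lbcl/2$, while their first focal points remain at distance at least $\lbcl$. Thus the local cut locus function $\tilde{c}:V\to\R^+$ associated with $\tilde{\imm}|_V$ satisfies $\tilde{c}(q)\geq c_0$ for all $q\in V$.

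With $\tilde c$ in hand, I would construct $W$ as the open 3-manifold obtained from
\[
\{(q,t)\mid q\in V,\;0\leq t<\tilde{c}(q)\}
\]
by identifying $(q_1,\tilde{c}(q_1))\sim(q_2,\tilde{c}(q_2))$ whenever $\tilde{\Gamma}(q_1,\tilde{c}(q_1))=\tilde{\Gamma}(q_2,\tilde{c}(q_2))$ with opposite incoming tangent vectors, i.e.\ at the cut-pair identifications of Lemma~\ref{th:cut-point}. The map $\tilde{\Gamma}$ descends to an immersion $W\to\Sp^3$ extending $\tilde{\imm}|_V$, and the manifold boundary of $W$ is precisely $V\times\{0\}\cong V$. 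Condition (i) of the local Alexandrov definition is the given mean-convexity of $\tilde{\imm}$; conditions (ii) and (iii) are built into the construction of $W$ from $\tilde{c}$ and the geodesics $t\mapsto(q,t)$.

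\textbf{Main obstacle.} The technical core is the verification in the last paragraph that, after cut-pair identifications, $W$ is genuinely a smooth 3-manifold with the claimed boundary rather than a topological space with singularities. One must exclude cut loci with triple (or higher) points and establish transversality of the two sheets meeting at each cut pair, so that the quotient carries a smooth structure. These regularity properties go beyond $C^0$-closeness of $\tilde{\Gamma}$ to $\gamma_{\imm}$ and require the bound $\ubdh$ on $|\nabla\II|$, which controls the Jacobi fields along inward $\tilde{\imm}$-geodesics and hence the way nearby geodesics converge at cut points. Quantifying how small $\epsilon$ and $R$ must be, in terms of $\lbcl$ and $\ubdh$, in order to guarantee this transversality and to exclude multiple identifications is where the bulk of the technical work lies.
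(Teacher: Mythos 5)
Your proposal goes a different route from the paper and, as you yourself flag in the "main obstacle" paragraph, that route has a genuine gap. You attempt to build $W$ as an abstract quotient of $\{(q,t)\mid q\in V,\,0\le t<\tilde c(q)\}$ by cut-pair identifications, and then must prove that this quotient is a smooth 3-manifold (no triple points, transversality of the two sheets, etc.). You do not prove this, and it is not a minor detail: it is essentially the content of the lemma. Worse, there is a circularity earlier in the proposal: the "local cut locus function $\tilde c$ associated with $\tilde\imm|_V$" is not yet well-defined at the point where you invoke it. A cut locus function is defined relative to a 3-manifold (it records where inward geodesics \emph{in that 3-manifold} first fail to minimize distance to the boundary), and you have not yet constructed such a 3-manifold. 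Using the naked normal exponential $\tilde\Gamma$ into $\Sp^3$ does not work, because an immersed surface in $\Sp^3$ typically self-intersects, and $\tilde\Gamma$-geodesics from distinct points will cross in $\Sp^3$ in ways that have nothing to do with the Alexandrov structure; the entire point of carrying the 3-manifold $N$ is to track which of these crossings "count."

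The paper sidesteps both problems with a single idea you are missing: it realizes $W$ concretely as a subset of an \emph{already-existing} smooth 3-manifold, rather than constructing it abstractly. Specifically, it takes the extended collar manifold $\widehat N\supset N$ of the nearby mean-convex Alexandrov embedding $\imm$ (built in Section~\ref{sec:chord arc} using $c_\imm\ge\lbcl$). Because $\|\imm-\tilde\imm\|_{C^1}<\epsilon$ on $B(p,2R)$, the piece $\tilde\imm|_{B(p,R)}$ embeds as a hypersurface $O\subset\widehat N$. One then defines a domain $U\subset\widehat N$ consisting of the short inward collar of $O$ glued to the deeper part of $\imm$'s collar, and measures distances and cut points with $\dist_U$; this makes the cut locus of $\tilde\imm$ well-defined. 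The chord-arc bound inside $\widehat N$ (this is where $\ubdh$ enters, via Proposition~\ref{thm:ec diameter bound}) guarantees that if one takes $R=3\Cab(\pi+\epsilon)$, then for $q\in O(R/3)$ the equivalence class $[q]_{\tilde\imm}$ stays within $O(2R/3)$, so everything relevant happens inside $U$. Setting $V=\{q\in O(R/3)\mid[q]_{\tilde\imm}\subset O(R/3)\}$ and $W=\{\gamma_{\tilde\imm}(q,t)\mid q\in V,\,0\le t\le c_{\tilde\imm}(q)\}\subset\widehat N$ gives $W$ its smooth 3-manifold structure automatically, with no need to verify transversality or exclude triple points in a quotient. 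To repair your argument you would need to supply exactly this device, or an equivalent way to borrow the ambient 3-manifold from $\imm$; the regularity-of-the-quotient issue you identify at the end cannot be waved away by $C^1$-closeness alone.
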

\begin{proof}
In the following discussion we use the construction in section~\ref{sec:chord arc} of a larger complete 3-manifold $\widehat{N}\supset N$ with boundary, such that the generalised cylinder coordinates \eqref{eq:cylinder} extend to an embedding $\hat{\gamma}_{\imm}:M\times[-\frac{\lbcl}{2},\lbcl)\hookrightarrow\widehat{N}$ and the immersion $\imm:N\to\Sp^3$ extends to an immersion $\hat{\imm}:\widehat{N}\to\Sp^3$. The restriction $\tilde{\imm}|_{B(p,R)}$ to $B(p,R)$ of an immersion $\tilde{\imm}$ satisfying~\eqref{eq:immersion bound} defines a submanifold $O\subset\widehat{N}$. We identify $\tilde{\imm}|_{B(p,R)}$ with the immersion $\hat{\imm}|_O$. The immersions $\imm$ and $\tilde{\imm}$ induce on $M$ two Riemannian metrics whose distance functions are denoted by $\dist_M$ and $\dist_{\tilde{M}}$, respectively. Again $\dist_{\widehat{N}}$ denotes the distance function of the complete Riemannian manifold $\widehat{N}$. We claim that the following estimate holds for some $\Cab>0$:
\begin{align}\label{eq:distance bound}
\dist_{\tilde{M}}(q,q')&\le\Cab\dist_{\widehat{N}}(q,q')&\mbox{for all }q,q'&\in O.
\end{align}
The bound~\eqref{eq:immersion bound} implies that for $q,q'\in O$ the corresponding points of the immersion $\imm$ are contained in $B(q,\epsilon)\cap M$ and $B(q',\epsilon)\cap M$. Moreover, the two Riemannian metrics induced by $\imm$ and $\tilde{\imm}$ on $B(p,2R)$ are uniformly bounded in terms of each other. The shortes path with respect to the latter Riemannian metric which connects $p,q\in B(p,R)$ is contained in $B(p,2R)$. We conclude $\dist_{\tilde{M}}(p,q)\le C_2\dist_M(p,q)$ with $C_2>0$ depending on $\epsilon$. Proposition~\ref{thm:ec diameter bound} implies
$$\dist_{\tilde{M}}(q,q')\le C_2\cab(\dist_{\widehat{N}}(q,q')+2\epsilon).$$
This implies \eqref{eq:distance bound} for $\dist_{\widehat{N}}(q,q')\ge2\epsilon$ with $\Cab=2C_2\cab$.

Let $D(q,r)$ be a totally geodesic disc in $\widehat{N}$ of radius $r>0$ tangent to $O$ at the center $q$. The curvature of $\tilde{\imm}$ is uniformly bounded. For sufficently small $r$ the corresponding surface is locally in $\widehat{N}$ a bounded exponential normal graph over $D(q,r)$. The gradient of the corresponding function is at $q'\in D(q,r)$ bounded by $C_3\dist_{D(q,r)}(q,q')$ with $C_3>0$. For sufficiently small $\epsilon>0$ all $q'\in O$ with $\dist_{\widehat{N}}(q,q')<2\epsilon$ belong to this graph and obey due to the gradient bound
$$
\dist_{\tilde{M}}(q,q') \leq 2\dist_{\widehat{N}}(q,q') \quad \mbox{ for all }q,q' \in O
\quad \mbox{ with }\dist_{\widehat{N}}(q,q')<2\epsilon.
$$
Therefore~\eqref{eq:distance bound} holds with $\Cab>0$ and $\epsilon>0$ depending only on $\lbcl$ and $\ubdh$.

Since the principal curvatures of $\tilde{\imm}$ are bounded by $\kappa\ind{max}$ the inward $O$-geodesics are definded in $\widehat{N}$ up to distances $\frac{\lbcl}{2}$ and define an immersion $\gamma_{\tilde{\imm}}:O\times[0,\frac{\lbcl}{2})\hookrightarrow\widehat{N}$. We define
$$
    U=\{\gamma_{\tilde{\imm}}(q,t)\mid
    q\in O\mbox{ and }0\leq t<\tfrac{\lbcl}{2}\}\cup
    \{\gamma_{\imm}(q,t)\mid q\in M\mbox{ and }
    \epsilon<t\leq c_{\imm}(q)\}.
$$
We choose $\frac{\epsilon}{\lbcl}$ small enough such that $U$ contains the subset $\{\gamma_{\tilde{\imm}}(q,\tfrac{\lbcl}{2})\mid q\in O\}$ of the closure of the first set in this union. Let $\dist_U$ denote the distance function of this non-complete Riemannian manifold $U$ with boundary $O$. Since $U$ is a submanifold of $\widehat{N}$ we have $\dist_{\widehat{N}}(p,q)\le\dist_U(p,q)$ for $p,q\in U$, and \eqref{eq:distance bound} implies the chord-arc bound
\begin{align}\label{eq:chord arc 4}
\dist_{\tilde{M}}(q,q')&\le\Cab\dist_{\widehat{N}}(q,q')\le\Cab\dist_U(q,q')&\mbox{for all }q,q'&\in O.
\end{align}
All cut locus functions of mean-convex Alexandrov embeddings are uniformly bounded from above by $\frac{\pi}{2}$, since otherwise a sphere with negative principal curvatures would touch $M$ inside of $N$ contradicting Hopf's maximum principle. If the inward $O$ geodesics of $p,q\in O$ intersect at distances not larger than $\frac{\pi}{2}$, then $\dist_{\tilde{M}}(p,q)\le\Cab\pi$. Let$R=3\Cab(\pi+\epsilon)$. For $r\le R$ we define $O(r)\subset O$ as the subset such that $\hat{\imm}|_{O(r)}$ is identified with the restriction $\tilde{\imm}|_{B(p,r)}$ to the ball $B(p,r)\subset M$ with respect to $\dist_{\tilde{M}}$. For $q\in O(\frac{R}{3})$ we have
$$
\left\{ q'\in O \mid
\exists t \in [0,\,\tfrac{\pi}{2}]
\mbox{ with }\dist_{U}(\gamma_{\tilde{\imm}}(q,t),q') \leq t \right\}
\subset \left\{ q' \in O \mid \dist_{U}(q,q') \leq \pi
\right\} \subset O(\tfrac{2}{3}R)\,.
$$
Therefore, for all $q\in O(\frac{R}{3})$ the cut locus function $c_{\tilde{\imm}}$ is well defined. For all such $q\in O(\frac{R}{3})$, let $[q]_{\tilde{\imm}}$ denote the set
$$
[q]_{\tilde{\imm}}=\left\{q'\in O\mid
\dist_U\left(\gamma_{\tilde{\imm}}(q,c_{\tilde{\imm}}(q)),q' \right)=
c_{\tilde{\imm}}(q)\right\}.
$$
For any closed subset $A\subset O$ the set $\{q\in O(\frac{R}{3})\mid [q]_{\tilde{\imm}}\cap A\not=\emptyset\}$ is a closed subset of $O(\frac{R}{3})$. Hence $V=\{q\in O(\frac{R}{3})\mid [q]_{\tilde{\imm}}\subset O(\frac{R}{3})\}$ is an open subset of $O$ which contains $p$ by the choice of $R$. Furthermore $W=\{\gamma_{\tilde{\imm}}(q,t)\mid q\in V \mbox{ and }0\leq t\leq c_{\tilde{\imm}}(q)\}$ is a subset of $\widehat{N}$ with boundary $V$. By construction $\hat{\imm}_{|V}$ is a local mean-convex Alexandrov embedding with $[q]_{\tilde{\imm}} \subset V$.
\end{proof}
\section{From local to global mean-convex Alexandrov embeddings}
In this section we consider an immersion $\tilde{\imm} : M \to \Sp^3$ which is covered by a family of local mean-convex Alexandrov embedded neighborhoods $\tilde{\imm}_p: V_p \to \Sp^3$ of $p\in M$. We explain how to extend $\tilde{\imm}: M \to \Sp^3$ to the 3-manifold $\tilde{N}=\cup_{p\in M}W_p$, with $\partial \tilde{N}=M$. The set of local immersions $\tilde{\imm}_p$ are constructed by local collar perturbations.
\begin{proposition}\label{thm:collar deformation 2} For given $\lbcl>0$ and $\ubdh>0$ there exist $\epsilon>0$ and $R>0$ with the following property: Let $\tilde{\imm}:M\to\Sp^3$ be an immersion with non-negative mean curvature and principal curvatures bounded by $\kappa\ind{max}=\cot(\lbcl)$ such that for all $p\in M$ there exists a mean-convex Alexandrov embedding $\imm_p:M\to\Sp^3$ with cut locus function \eqref{eq:cutlocus} bounded from below by $\lbcl$ and second fundamental form obeying \eqref{eq:upper bound derivative of h}. If $\imm_p$ and $\tilde{\imm}$ obey for all $p\in M$ \eqref{eq:immersion bound} on the ball $B(p,2R)$ with respect to the metric induced by $\tilde{\imm}$, then $\tilde{\imm}$ extends to a mean-convex Alexandrov embedding.
\end{proposition}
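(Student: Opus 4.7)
The plan is to apply Lemma~\ref{thm:collar deformation 1} at every point of $M$, producing a family of local mean-convex Alexandrov embeddings $\tilde{\imm}|_{V_p}: V_p \to \Sp^3$ extending to open 3-manifolds $W_p$ with $V_p = \partial W_p$ and $p \in V_p \subset M$. The local cut locus functions $c_{\tilde{\imm},p}$ on $V_p$ are bounded below by a positive constant $c_0$ depending only on $\lbcl$ and $\ubdh$, inherited from the construction in Lemma~\ref{thm:collar deformation 1}.

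Next I would show that the local pieces glue coherently. The inward $M$-geodesic $\gamma_{\tilde{\imm}}(q,\cdot)$ into $\Sp^3$ is determined intrinsically by $\tilde{\imm}$ and its inward normal at $q\in M$, so its candidate cut points — either a first focal point (controlled by the second fundamental form of $\tilde{\imm}$) or the intersection of two inward geodesics of equal length in $\Sp^3$ — are properties of $\tilde{\imm}$ alone, not of the local extension. Hence $c_{\tilde{\imm},p}(q)=c_{\tilde{\imm},p'}(q)$ whenever $q\in V_p\cap V_{p'}$, defining a global cut locus function $c_{\tilde{\imm}}$ on $M$ bounded below by $c_0$. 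I would then define $\tilde{N}$ as the quotient of $\bigsqcup_{p\in M}W_p$ by the equivalence relation that identifies points with matching cylinder coordinates $(q,t)$ over $q$ in any overlap $V_p\cap V_{p'}$, including at cut points. The immersion extends to $\hat{\imm}:\tilde{N}\to\Sp^3$ via the inward-geodesic map in $\Sp^3$, and the $W_p$ serve as smooth charts giving $\tilde{N}$ the structure of a 3-manifold with $\partial\tilde{N}=M$.

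Finally, condition (i) of Definition~\ref{mean-convex} holds by hypothesis on $\tilde{\imm}$; for condition (ii), completeness of $\tilde{N}$ in the induced metric follows from the uniform lower bound $c_0$ on the cut locus together with the chord-arc bound of Proposition~\ref{thm:ec diameter bound} applied within each chart, which forces the cylinder-coordinate projection of any Cauchy sequence in $\tilde{N}$ to converge in the complete surface $M$, while the normal coordinate is bounded in $[0,\tfrac{\pi}{2}]$. The main obstacle is the gluing step: showing that the quotient is Hausdorff and genuinely a manifold with boundary, i.e., ruling out accidental identifications of cylinder coordinates $(q,t)$ and $(q',t')$ that happen to share the same ambient point in $\Sp^3$ but originate from unrelated inward-geodesic trajectories. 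This is resolved by the fact that each $W_p$ is embedded in the larger manifold $\widehat{N}$ produced in Section~\ref{sec:chord arc} (applied to the local data of Lemma~\ref{thm:collar deformation 1}), so the cut-point equivalence encoded within each chart is the correct one, and compatibility of these local equivalences on overlaps follows from the intrinsic characterisation of cut points established in the second paragraph.
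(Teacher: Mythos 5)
Your overall architecture (apply Lemma~\ref{thm:collar deformation 1} at every $p$, glue the local pieces $W_p$, then verify completeness) matches the paper's. However, there is a genuine gap at the central step: you assert that the local cut-locus functions $c_{\tilde{\imm},p}$ agree on overlaps because ``the candidate cut points are properties of $\tilde{\imm}$ alone, not of the local extension.'' This is exactly what has to be proved, and it is not obvious. A cut point is either a first focal point (that part is indeed intrinsic to $\tilde{\imm}$) or an intersection of two inward geodesics of equal length \emph{inside the local 3-manifold}, not in $\Sp^3$. Since $\tilde{\imm}$ need not be injective, two inward $\tilde{\imm}$-geodesics can meet in $\Sp^3$ without the corresponding lifted curves in $W_p$ actually meeting; whether they meet in $W_p$ depends on the ambient structure $U_p\subset\widehat{N}_p$, which was built from the reference immersion $\imm_p$ and so a priori differs from $U_q\subset\widehat{N}_q$. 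In Lemma~\ref{thm:collar deformation 1} the cut-locus function was defined using $\dist_{U_p}$, so its dependence on $\imm_p$ is real and must be ruled out by argument, not by assertion.

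The paper handles precisely this by introducing the sets
\[
A_p(t)=\{(p',q') \mid \dist_{U_p}(p',q')=\dist_{\Sp^3}(\tilde{\imm}(p'),\tilde{\imm}(q'))=t\}
\]
and proving $A_p(t)=A_q(t)$ for all $t\in[0,\pi]$ by showing the set of $t_0$ for which this holds up to $t_0$ is both open and closed (using, among other things, the mean-convexity argument of Lemma~\ref{thm:second derivative} to exclude local minima of $\dist_{U_p}$). Only then does it follow that the cut-locus functions coincide and that the sets $Z_p$, $Z_q$ can be consistently identified. There is also a second, smaller omission: even after identifying the cut-locus functions, one must show the transition maps between charts are smooth (not merely bijective), which the paper obtains from the Myers--Steenrod theorem applied to the distance-preserving bijection $\psi$. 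Your outline does not supply either of these ingredients, and your closing appeal to ``the intrinsic characterisation of cut points established in the second paragraph'' is circular since that characterisation was never established.
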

\begin{proof} Let $\Cab$ denote the same constant as in Lemma~\ref{thm:collar deformation 1}. We apply Lemma~\ref{thm:collar deformation 1} to all $p\in M$ with the same $R=3\Cab(\pi+\epsilon)$ and decorate the corresponding objects with an index $p$. The balls $B(p,R)$ are embedded as Riemannian submanifolds $O_p\hookrightarrow\widehat{N}_p$. We obtain a covering of $M$ by open subsets $V_p=\{q\in O_p(\frac{R}{3})\mid [q]_{\tilde{\imm}}\subset O_p(\frac{R}{3})\}$ of the balls $O_p(\frac{R}{3})=B(p,\frac{R}{3})$ with respect to the metric induced by $\tilde{\imm}$. Finally, the restrictions $\tilde{\imm}|_{V_p}$ of $\tilde{\imm}$ to the members $V_p$ of this covering extend to local mean-convex Alexandrov embeddings $\tilde{\imm}_p:W_p\to\Sp^3$ with open subsets $W_p\subset U_p$.

We shall glue the manifolds $(W_p)_{p\in M}$ to obtain a 3-manifold $\tilde{N}$ with boundary $M$. In order to glue $W_p$ with $W_q$ for $\dist_M(p,q)>\epsilon$ we choose finitely many intermediate points $p=p_0,\ldots,p_L=q$ with distances $\dist_M(p_{l-1},p_l)<\epsilon$ for $l=1,\ldots L$. Therefore it suffices to consider $p,q\in M$ with $\dist_M(p,q)<\epsilon$. The intersection $O_{pq}(\frac{R}{3})=O_p(\frac{R}{3})\cap O_q(\frac{R}{3})$ is a connected subset of the Riemannian manifold $M$ with the metric induced by $\tilde{\imm}$. The immersions $\tilde{\imm}_p$ and $\tilde{\imm}_q$ define on $U_p \subset \widehat{N}_p$ and $U_q \subset \hat{N}_q$ chord-arc distance $\dist_{U_p}(p',q')$ and $\dist_{U_q}(p',q')$. Now define $R(t)=\Cab(3(\pi + \epsilon)-t)$, and set $\bar{O}_{pq}(R(t))=\bar{O}_p(R(t))\cap\bar{O}_q(R(t))$ as the intersection of the closures of the related shrunk open sets. Note that $\bar{O}_{pq}(R(t))\subset O_p\cap O_q$ for $t>0$. Denote
\begin{align*}
    A_p(t) &=\{  (p',q') \in \bar{O}_{pq}(R(t))\times \bar{O}_{pq}(R(t))\mid \dist_{U_p}(p',q') = \dist_{\Sp^3}(\tilde{\imm}( p') ,\tilde{\imm}(q') )=t \}\,,\\
    A_q(t) &=\{  (p',q') \in \bar{O}_{pq}(R(t))\times  \bar{O}_{pq}(R(t)) \mid \dist_{U_q}(p',q') = \dist_{\Sp^3}(\tilde{\imm}( p') ,\tilde{\imm}(q') )=t \}\,.
\end{align*}
If $(p',q') \in A_p(t)$ then $p'$ and $q'$ are connected by a geodesic segment lying in $U_p$ which is mapped by $\tilde\imm_p$ to the unique shortest geodesic in $\Sp^3$ from $\tilde\imm(p')$ to $\tilde\imm(q')$.
By Lemma \ref{thm:collar deformation 1} the points in $O_p$ obey in $U_p$ a chord-arc bound~\eqref{eq:chord arc 4}. If the geodesic segment in $U_p$ from $p'$ to $q'$ meets a point $p'' \in M$, then $\dist_{U_p}(p'',q')=\dist_{U_p}(p',q')-\dist_{U_p}(p',p'')$, and \eqref{eq:chord arc 4} implies
\begin{align*}
\dist_{\tilde{M}}(p,p'')&\le\dist_{\tilde{M}}(p,q')+\dist_{\tilde{M}}(q',p'')\\
&\le R(\dist_{U_p}(p',q'))+\Cab(\dist_{U_p}(p',q')-\dist_{U_p}(p',p''))=
R(\dist_{U_p}(p',p''))\\
\dist_{\tilde{M}}(q,p'')&\le\dist_{\tilde{M}}(q,p')+\dist_{\tilde{M}}(p',p'')\\
&\le R(\dist_{U_p}(p',q'))+\Cab(\dist_{U_p}(p',q')-\dist_{U_p}(p',p''))=
R(\dist_{U_p}(p',p'')).
\end{align*}
With $\dist_{U_p}(p',p'')=\dist_{\Sp^3}(\tilde{\imm}(p'),\tilde{\imm}(p''))$ and with interchanged $p'$ and $q'$ these arguments show
\begin{align}\label{eq:implication}
(p',p'')&\in A_p(\dist_{U_p}(p',p''))&&\mbox{and}&
(p'',q')\in A_p(\dist_{U_p}(p'',q')).
\end{align}

\noindent{\it Claim.} $A_p(t)=A_q (t)$ for all $t \in [0,\pi]$.

We shall prove this claim later, and first show that it implies that $c_{\tilde{\imm}_p}$ and $c_{\tilde{\imm}_q}$ coincide on $V_p\cap V_q$. Let us assume on the contrary $c_{\tilde{\imm}_p}(p')<c_{\tilde{\imm}_q}(p')$ for $p'\in V_p\cap V_q$. The domain $U_p$ contains a ball of radius $c_{\tilde{\imm}_p}(p')$ centered at $\gamma_{\tilde{\imm}_p} (p', c_{\tilde{\imm}_p}(p'))$ (analogously with $U_q$ and $c_{\tilde{\imm}_q}(p')$).

Then there exists $p'\ne q'\in [p']_{\tilde{\imm}_p}\subset O_p(\frac{R}{3})\cap O_q(\frac{2R}{3})$. Then $p'$ and $q'$ are connected by a segment of a geodesic in $U_p$, which meets the boundary only at the end points $p'$ and $q'$ by strict convexity of a geodesic ball centered at the cut point. Therefore there does not exists a point $p''\in O_p$ with $\dist_{U_p}(p',q')=\dist_{U_p}(p',p'')+\dist_{U_p}(p'',q')$. Due to the claim there does not exist $p''\in O_q(\frac{2R}{3})$ with $\dist_{U_q}(p',q')=\dist_{U_q}(p',p'')+\dist_{U_q}(p'',q')$. Therefore the shortest path in $U_q$ connecting $p'$ and $q'$ is also a segment of a geodesic, which meets the boundary only at the end point $p'$ and $q'$. Furthermore, both immersions $\hat{\imm}_p$ and $\hat{\imm}_q$ map these segments onto the same geodesic in $\Sp^3$ connecting $\tilde{\imm}(p')$ and $\tilde{\imm}(q')$. Since $p'$ and $q'$ both belong to $[p']_{\tilde{\imm}_p}$ there exists a unique geodesic 2-sphere in $\Sp^3$, which intersects the image of $\tilde{\imm}$ orthogonally at $\tilde{\imm}(p')$ and at $\tilde{\imm}(q')$. The pre-image with respect to $\hat{\imm}_q$ of this 2-sphere in $U_q$ intersects $O_q$ orthogonally at $p'$ and $q'$, and contains a segment of a geodesic connecting $p'$ and $q'$. This segment of a geodesic is in $U_q$, because it is contained in a geodesic ball of larger radius and centered at the cut point of $p'$ in $U_q$. This implies that the inward $O_q$-geodesics at $p'$ and $q'$ also is contained in the larger geodesic ball of $U_q$ and they meet each other at distance $c_{\tilde{\imm}_p}(p')=c_{\tilde{\imm}_q}(q')$ in contradiction to $c_{\tilde{\imm}_p}(p')<c_{\tilde{\imm}_q}(p')$. Interchanging $p$ and $q$ we get the other inequality, and thus both cut locus functions coincide:
\begin{align}\label{eq:cut locus equal}
c_{\tilde{\imm}_p}(p')&=c_{\tilde{\imm}_q}(p')&\mbox{for all }p'\in V_p\cap V_q.
\end{align}

We claim that the open set $V_p \cap V_q$ extends in two different but isometric extensions $Z_p\subset W_p$ and $Z_q\subset W_q$ as local mean-convex Alexandrov embeddings with
\begin{align*}
    Z_p &=\{\gamma_{\tilde{\imm}_p}(p',t)\in W_p\mid p' \in V_p\cap V_q
    \mbox{ and }0 < t\leq c_{\tilde{\imm}_p}(p')\}\,,\\
    Z_q &=\{\gamma_{\tilde{\imm}_q}(q',t)\in W_q\mid q' \in V_p\cap V_q
    \mbox{ and }0 < t\leq c_{\tilde{\imm}_q}(q')\}\,.
\end{align*}
We identify $Z_p$ with $Z_q$ by a bijection $\psi: Z_p \to Z_q$ with trivial restriction $\psi|_{V_p\cap V_q}$ to $V_p\cap V_q$ such that $\tilde{\imm}_p|_{Z_p}=\tilde{\imm}_q|_{Z_q}\circ\psi$.  By definition of the generalised cylinder coordinates~\eqref{eq:cylinder} this is equivalent to
\begin{align}\label{eq:bijection}
\psi \circ \gamma_{\tilde\imm_p}(p',t)&=\gamma_{\tilde{\imm}_q}(p',t)&\mbox{ for all }p'&\in V_p\cap V_q.
\end{align}
Due to~\eqref{eq:cut locus equal} condition~\eqref{eq:bijection} uniquely defines $\psi$. Indeed ~\eqref{eq:cut locus equal} implies that for $p'\in V_p\cap V_q$ the equivalence classes $[p']_{\tilde{\imm}_p}$ and $[p']_{\tilde{\imm}_q}$ \eqref{eq:ec cutlocus} of the local mean-convex Alexandrov embeddings $\tilde{\imm}_p:W_p\to\Sp^3$ and $\tilde{\imm}_q:W_q\to\Sp^3$ coincide. Hence $\psi$ is well defined.

Since $\tilde{\imm}_p$ and $\tilde{\imm}_q$ induce the Riemannian metrics of $Z_p$ and $Z_q$ any $\psi$ obeying~\eqref{eq:bijection} is distance preserving. By a theorem of Myers-Steenrod \cite{MyeS} (see also \cite{petersen}, Theorem 18, p.\ 147) such distance preserving bijections bewteen Riemannian manifolds are diffeomorphisms.

The union of mean-convex Alexandrov embeddings extend to a manifold $\tilde{N}$. It remains to show that $\tilde{N}$ is complete with respect to the Riemannian metric induced by $\tilde{\imm}$. By construction, every point of the Riemannian manifold $\tilde{N}$ with the metric induced by $\tilde{\imm}$ is the center of an $\epsilon$-ball contained in one of the complete manifolds $\widehat{N}_p$. Therefore $\tilde{N}$ is complete.

{\it Proof of the Claim.} We consider the set $B$ of all $t_0\in[0,\pi]$ such that $A_p(t) = A_q(t)$ holds for all $t \in [0,\,t_0]$. We prove that $B$ is both closed and open in $[0,\,\pi]$, and thus $B=[0,\pi]$. Due to the lower bound $\lbcl$ of the cut locus function, $B$ contains the set $[0,\,\lbcl - \epsilon)$.

Set $t_0= \sup B$, and suppose $(p',\,q') \in A_p(t_0)$. We need to consider two cases. In the first case we assume that the unique geodesic in $U_p$ from $p'$ to $q'$ goes through a point $p'' \in O_p\setminus \{p',\,q'\}$. As in~\eqref{eq:implication} this implies for $t=\dist_{U_p}(p',p'')<t_0$ and $t'=\dist_{U_p}(p'',q')<t_0$
\begin{align*}
(p',p'') \in A_p(t) &= A_q(t)&&\mbox{and}&
(p'',q') \in A_p(t') &= A_q(t').
 \end{align*}
Both geodesics in $U_q$ which connect $p'$ to $p''$, and $p''$ to $q'$, are mapped by $\tilde{\imm}_q$ onto the unique geodesic in $\Sp^3$ connecting $\tilde{\imm}(p')$ to  $\tilde{\imm}(q')$. This implies that $p'$ and $q'$ are connected in $U_q$ by a smooth geodesic, and hence $(p',q') \in A_q(t_0)$. This proves $A_p(t_0) \subset A_q(t_0)$, and analogously also the other inclusion, and therefore $A_p(t_0) = A_q(t_0)$.

For the second case we may assume that there is no point $p'' \in O_p\setminus \{p',q'\}$ on the geodesic in $U_p$ from $p'$ to $q'$.
The mean-convexity of the surface implies that $(p',q')$ is not a local minimum of the function $\dist_{U_p}$ on
$O_p\times O_p$ (see Lemma~\ref{thm:second derivative}).
Then there exists a sequence of $(p_n,q_n)$ which converges to $(p',q')$ such that $\dist_{U_p}(p_n,q_n)<t_0$.
The continuity of $\dist_{U_p}$ and $\dist_{U_q}$ imply that
\[
\dist_{U_q}(p',q') = \lim_{n \to \infty} \dist_{U_q} (p_n,q_n)
= \lim_{n \to \infty} \dist_{U_p} (p_n,q_n) = \dist_{U_p}(p',q')\,.
\]
Hence $(p',q') \in A_q(t_0)$, and thus $A_p(t_0) \subset A_q(t_0)$. The other inclusion is obtained by switching the roles of $p$ and $q$. Therefore  $A_p(t_0) = A_q(t_0)$ in both cases, which shows $t_0 \in B$, and proves that $B$ is closed.

We now show that $B$ is open. If the maximum $t_0$ is smaller than
$\pi$, then there exists a sequence $t_n \in (t_0,\,\pi]$ which converges to $t_0$ such that $A_p(t_n) \neq A_q(t_n)$. By passing to a subsequence we may assume without loss of generality that there exists $(p_n,q_n) \in A_p(t_n)$ but $(p_n,q_n) \notin A_q(t_n)$. A subsequence of $(p_n,q_n)$ converges to $(p',q') \in A_p(t_0) = A_q(t_0)$. Therefore $p'$ and $q'$ are connected by smooth geodesic segments in both $U_p$ and $U_q$. Both of these geodesic segments are mapped by $\tilde\imm_p$ and $\tilde\imm_q$ to the unique shortest geodesic in $\Sp^3$ from $\tilde{\imm}(p')$ to $\tilde{\imm}(q')$. Furthermore, both these geodesic segments meet the boundaries $O_p$ and $O_q$ in the same points. The balls of radii $\lbcl - \epsilon$ around each of such boundary points in $U_p$ and $U_q$ are isometric. In the complement of these balls both geodesic segments have positive distances to the boundaries  $O_p$ and $O_q$. Hence two tubular neighbourhoods of these geodesic segments in  $U_p$ and $U_q$ are also isometric.

For large $n$ the geodesic segments in $U_p$ connecting $p_n$ with $q_n$ belong to the tubular neighbourhood in $U_p$. They are isometric to geodesic segments in $U_q$ connecting $p_n$ with $q_n$. This implies $(p_n,q_n) \in A_q(t_n)$ for large $n$. This contradicts the assumption that $t_0 < \pi$. Hence we have that $B=[0,\,\pi]$, and the claim is proven.
\end{proof}
\section{Appendix - Proof of Lemma~\ref{thm:second derivative}}
\begin{proof} We shall construct a Killing field $\vartheta$ with the desired properties, which rotates $\gamma$ around two antipodes of $\gamma$. The corresponding rotated geodesics $\gamma_\vartheta(s,\cdot)$ belong to a unique great 2-sphere $\Sp^2\subset\Sp^3$. The corresponding paths $s\mapsto p(s)$ and $s\mapsto q(s)$ move along the intersection of this 2-sphere $\Sp^2$ with $M$. Hence we can calculate all derivatives on this sphere.

We parameterise this 2-sphere by the real parameter $s$ of the family $s\mapsto\gamma_{\vartheta}(s,\cdot)$ of rotated geodesics, and the real arc length parameter $t$ of these geodesics. We choose the equator as the points corresponding to $t=0$ with distance $\frac{\pi}{2}$ to the rotation axis. Let $t_p$ and $t_q$ denote the values of this parameter $t$ at the points $p(s)$ and $q(s)$. Hence $\dist_N(p,q)=|t_p-t_q|$. The vector fields $\vartheta$ and the geodesic vector field $\dot{\gamma}$ along the geodesics $\gamma_\vartheta(s,\cdot)$ form an orthogonal basis of the tangent spaces of this 2-sphere away from the zeroes of $\vartheta$. The vector fields $\vartheta$ and $\dot{\gamma}$ have at $(s,t)$ the scalar products
\begin{align*}
g(\vartheta,\vartheta)&=\cos^2(t)\,,&
g(\vartheta,\dot{\gamma})&=0\,,&
g(\dot{\gamma},\dot{\gamma})&=1\,.
\end{align*}
Since $\dot{\gamma}$ is a geodesic vector field the derivative $\nabla_{\dot{\gamma}}\dot{\gamma}$ vanishes. Moreover, the geodesic curvature
in $\Sp^2$ of the integral curve of $\vartheta$ starting at $(s,t)$ is equal to $\tan(t)$. Therefore at $(s,t)$ we have
\begin{align*}
    \nabla_{\vartheta}\vartheta&
    =\cos^2(t)\tan(t)\dot{\gamma}
    =\cos(t)\sin(t)\dot{\gamma}\,,&
    \nabla_{\vartheta}\dot{\gamma}&
    =-\tan(t)\vartheta \,,\\
    \nabla_{\dot{\gamma}}\vartheta&
    =-\tan(t)\vartheta \,,&
    \nabla_{\dot{\gamma}}\dot{\gamma}& =0\,.
\end{align*}
We parameterise a neighbourhood of the geodesic from $p$ to $q$ in such a way that the corresponding vector field $\dot{\gamma}$ points inward to $N$ at $p$ and outward of $N$ at $q$, respectively. The derivatives of $s\mapsto p(s)$ and $s\mapsto q(s)$ are equal to
\begin{align*}
    p'&=\vartheta(p)-\dot{\gamma}(p)
    \frac{g(\normal(p),\vartheta(p))}
     {g(\normal(p),\dot{\gamma}(p))}\quad\mbox{ and }&
    q'&=\vartheta(q)-\dot{\gamma}(q)
    \frac{g(\normal(q),\vartheta(q))}
     {g(\normal(q),\dot{\gamma}(q))}.
\end{align*}
The lengths $|p'|$ and $|q'|$ depend on the angles $\sphericalangle(\normal(p),\vartheta(p))$ and $\sphericalangle(\normal(q),\vartheta(q))$. Since $\vartheta$ is orthogonal to $\dot{\gamma}$ and $\sphericalangle(\normal(p),\dot{\gamma}(p))=\chi_p$ and $\sphericalangle(\normal(q),\dot{\gamma}(q))=\chi_q$ these angles obey $\sphericalangle(\normal(p),\vartheta(p))\in[\frac{\pi}{2}-\chi_p,\frac{\pi}{2}+\chi_p]$ and $\sphericalangle(\normal(q),\vartheta(q))\in[\frac{\pi}{2}-\chi_q,\frac{\pi}{2}+\chi_q]$. Then we have
\begin{align}\label{eq:tangent bound}
|p'|&\le\frac{|\cos(t_p)|}{\cos(\chi_p)}&
|q'|&\le\frac{|\cos(t_q)|}{\cos(\chi_q)}.
\vspace{-5mm}\end{align}
$$
    d'=
    \frac{g(\normal(p),\,\vartheta(p))}
         {g(\normal(p),\,\dot{\gamma}(p))}-
    \frac{g(\normal(q),\,\vartheta(q))}
         {g(\normal(q),\,\dot{\gamma}(q))}=
    \frac{g(\normal(p),\,\vartheta(p))}
         {\cos(\chi_p)}-
    \frac{g(\normal(q),\,\vartheta(q))}
         {\cos(\chi_q)}.
$$
Along the paths $p$ and $q$ with $X=p'$ and $X=q'$, respectively, we have at $(s,t)$
\begin{align*}
    \nabla_X\frac{g(\normal,\,\vartheta)}
    {g(\normal,\,\dot{\gamma})}&=
    \frac{g(\nabla_X\normal,\,\vartheta)+
    g(\normal,\,\nabla_X\vartheta)}
    {g(\normal,\,\dot{\gamma})}-
    \frac{g(\normal,\vartheta)(
    g(\nabla_X\normal,\,\dot{\gamma})+
    g(\normal,\,\nabla_X\dot{\gamma}))}
    {(g(\normal,\,\dot{\gamma}))^2}\\
    &=\frac{g(\nabla_X\normal,X)+g(\normal,\nabla_X\vartheta)}
           {g(\normal,\dot{\gamma})}-
      \frac{g(\normal,\vartheta)
                g(\normal,\nabla_X\dot{\gamma})}
           {g(\normal,\dot{\gamma})^2}\\
    &=-\frac{\II(X,\,X)}{g(\normal,\dot{\gamma})}
    +\cos(t)\sin(t)
    +2\tan(t)\left(\frac{g(\normal,\,\vartheta)}
                        {g(\normal,\,\dot{\gamma})}\right)^2.
\end{align*}
Hence the second derivative is equal to
\begin{align*}
d''&=-\frac{\II(p',\,p')}{\cos(\chi_p)}-\frac{\II(q',\,q')}{\cos(\chi_q)}
+\frac{\sin(2t_p)-\sin(2t_q)}{2}\\
   &+2\tan(t_p)\left(\frac{g(\normal(p),\,\vartheta(p))}
         {g(\normal(p),\,\dot{\gamma}(p))}\right)^2
-2\tan(t_q)\left(\frac{g(\normal(q),\,\vartheta(q))}
         {g(\normal(q),\,\dot{\gamma}(q))}\right)^2.
\end{align*}
If along the rotation of the geodesic for $s\in[0,s_0]$ the following inequalities are satisfied
\begin{align}\label{eq:assumption 1}
-\tfrac{\pi}{2}\leq t_p&\leq 0,&
0\leq t_q&\leq\tfrac{\pi}{2},&
\tfrac{\lbcl}{2}&\leq d=t_q-t_p,\quad\mbox{ and }&
(\sin^2(\chi_p)+\sin^2(\chi_q))^{\frac{1}{2}}&\leq\tfrac{1}{2},
\end{align}
then $\min\{\cos(\chi_p),\cos(\chi_q)\}\geq\tfrac{\sqrt{3}}{2}$ implies the third inequality of \eqref{eq:second derivatives}:
\begin{align*}
    |p'|+|q'|&\le\frac{\cos(t_p)}{\cos(\chi_p)}
              +\frac{\cos(t_p)}{\cos(\chi_p)}
    \leq\tfrac{2}{\sqrt{3}}(\cos(t_p)+\cos(t_q))
    =\tfrac{4}{\sqrt{3}}\cos\left(\tfrac{d}{2}\right)
    \cos\left(\tfrac{t_p+t_q}{2}\right)\leq3\vn\cos(\tfrac{d}{2}).
\end{align*}
Furthermore, the last two terms of $d''$ are bounded by
\begin{align*}
\left|\tan(t_p)\left(\frac{g(\normal(p),\,\vartheta(p))}
         {g(\normal(p),\,\dot{\gamma}(p))}\right)^2\right|&\leq
\sin(|t_p|)\cos(t_p)\tan^2(\chi_p)
\leq\frac{\sin(2|t_p|)}{2\cdot 3}\\
\left|\tan(t_q)\left(\frac{g(\normal(q),\,\vartheta(q))}
         {g(\normal(q),\,\dot{\gamma}(q))}\right)^2\right|&\leq
\sin(|t_q|)\cos(t_q)\tan^2(\chi_q)
\leq\frac{\sin(2|t_q|)}{2\cdot 3}.
\end{align*}
Due to $\sin(2t_q)-\sin(2t_p) = 2\sin(t_q-t_p)\cos(t_p+t_q)$ and we arrive at
\begin{align}\label{eq:second bound}
    d''(s)&\leq
    -\frac{\II(p',\,p')}{\cos(\chi_p)}-\frac{\II(q',\,q')}{\cos(\chi_q)}
    -\sin(d)\cos(t_p+t_q)\left(1-\tfrac{2}{3}\right).
\end{align}
Now we claim that the second inequality of \eqref{eq:second derivatives} is implied by \eqref{eq:assumption 1} and the existence of $\delta$ which satisfy
\begin{align}\label{eq:assumption 2}
\delta&\leq\frac{1}{9}\sin\left(\tfrac{\lbcl}{2}\right)\cos(t_p+t_q),&
-\frac{\II(p',\,p')}{|p'|^2}&
\leq\frac{\delta}{2}\quad\mbox{ and }&
-\frac{\II(q',\,q')}{|q'|^2}&
\leq\frac{\delta}{2}.
\end{align}
The assumption \eqref{eq:assumption 1} implies $t_p\leq-d+\frac{\pi}{2}$ and $d-\frac{\pi}{2}\leq t_q$.

For $d\in[\frac{\pi}{2},\pi)$ we use $\cos(t_p)\leq\sin(d)$ and $\cos(t_q)\leq\sin(d)$ and for $d\in[\frac{\lbcl}{2},\frac{\pi}{2})$ we use $\cos(t_p)\leq 1$ and $\cos(t_q)\leq 1$ to obtain
\begin{align*}
\tfrac{1}{2\cdot 9}\sin(\tfrac{\lbcl}{2})
\max\left\{\tfrac{|p'|^2}{\cos(\chi_p)},\tfrac{|q'|^2}{\cos(\chi_q)}\right\}
\leq\tfrac{3\sqrt{3}}{8\cdot 9}\sin(\tfrac{\lbcl}{2})
\max\left\{\tfrac{|p'|^2}{\cos(\chi_p)},\tfrac{|q'|^2}{\cos(\chi_q)}\right\}
\leq\tfrac{1}{9}\sin(d).
\end{align*}
Together with \eqref{eq:assumption 2} we can estimate the first two terms in \eqref{eq:second bound}:
\begin{align*}
-\tfrac{\II(p',\,p')}{\cos(\chi_p)}\le
\tfrac{\delta}{2}\tfrac{|p'|^2}{\cos(\chi_p)}&\le
\tfrac{1}{9}\sin(d)\cos(t_p+t_q)&
-\tfrac{\II(q',\,q')}{\cos(\chi_q)}\le
\tfrac{\delta}{2}\tfrac{|q'|^2}{\cos(\chi_q)}&\le
\tfrac{1}{9}\sin(d)\cos(t_p+t_q).
\end{align*}
The third inequality of \eqref{eq:assumption 1} implies $\sin(\tfrac{\lbcl}{2})\cos(\tfrac{d}{2}) \le 2\sin(\tfrac{\lbcl}{4})\cos(\tfrac{d}{2})\le 2\sin(\tfrac{d}{2})\cos(\tfrac{d}{2})=\sin(d)$. Thus the second inequality of \eqref{eq:second derivatives} indeed follows with the help of \eqref{eq:second bound} from \eqref{eq:assumption 1} and \eqref{eq:assumption 2}.

We shall show first that there exists a vector field $\vartheta$ obeying at $s=0$
\begin{align*}
\delta&\leq\frac{1}{18}\sin\left(\tfrac{\lbcl}{2}\right)\cos(t_p+t_q),&
-\frac{\II(p',\,p')}{|p'|^2}&
\leq\frac{\delta}{4}\quad\mbox{ and }&
-\frac{\II(q',\,q')}{|q'|^2}&
\leq\frac{\delta}{4}.
\end{align*}
The Killing field $\vartheta$ is uniquely determined by two choices: firstly, the choice of a great 2-sphere $\Sp^2\subset\Sp^3$, which contains the closed geodesic from $p$ to $q$, and secondly, a choice of the zeroes of $\vartheta$, or equivalently a choice of the coordinates $t_p$ and $t_q$ with $t_q-t_p=d\mod \pi$. We start with $t_q=-t_p=\frac{d}{2}$ and set $\delta=\frac{1}{18}\sin(\frac{\lbcl}{2})$.

Now we choose the 2-sphere $\Sp^2\subset\Sp^3$ which contains the geodesic which connects $p$ and $q$.
This 2-sphere intersects $M$ along curves at $p$ and $q$. It is uniquely determined either by the line in $T_pM$ tangent to $\Sp^2$ or by a line in $T_qM$, which is tangent to $\Sp^2$. Since $\imm$ is a mean-convex Alexandrov embedding and both principal curvatures are uniformly bounded by $\kappa\ind{max}$, the cone angles of the double cones $\{X\in T_pM\mid \II(X,X)\geq-\frac{1}{4}\delta|X|^2\}$ and $\{X\in T_qM\mid \II(X,X)\geq-\frac{1}{4}\delta|X|^2\}$ are not smaller than $\tfrac{\pi}{2}+\Order(\delta)$. For sufficiently small $\epsilon\geq(\sin^2(\chi_p)+\sin^2(\chi_q))^{\frac{1}{2}}$ the tangent direction in the plane orthogonal to $\dot{\gamma}(p)$ in $T_pN$, and in the plane orthogonal to $\dot{\gamma}(q)$ in $T_qN$ of the corresponding spheres build two double cones with cone angles not smaller than $\tfrac{\pi}{2}$. Hence the intersection of both double cones is non-empty and there exists a 2-sphere where the intersecting curves of $\Sp^2 \cap M$ at $p$ and $q$ has tangent vectors which satisfy both
second and third conditions of $(\ref{eq:assumption 2})$.

Secondly we shall show that the inequalities \eqref{eq:assumption 1} and \eqref{eq:assumption 2} are satisfied for $s\in[0,\,s_0]$ with some $s_0>0$. Since the curvature is bounded by $\cot(\lbcl)$ and due to the assumption $(\sin^2(\chi_p)+\sin^2(\chi_q))^{\frac{1}{2}}\leq\epsilon$ there exists $s_0$ such that $t_p$ and $t_q$ do not reach the roots of $\vartheta$ for $s\in[0,\,s_0]$. Since the derivatives of $\cos(\chi_p)$, $\cos(\chi_q)$, $t_p$ and $t_q$ with respect to $s$ are uniformly bounded, there exists $s_0>0$ such that the inequalities \eqref{eq:assumption 1} and the first inequality of \eqref{eq:assumption 2} are satisfied for $s\in[0,\,s_0]$. Due to \eqref{eq:upper bound derivative of h} also the derivatives of $\II(p',p')$ and $\II(q',q')$ are uniformly bounded. Hence there exists $s_0>0$ only depending on $\lbcl$ and $\ubdh$, such that the second and the third inequality of \eqref{eq:second derivatives} are satisfied for $s\in[0,\,s_0]$. We choose $s_0>0$ small such that $|d(s)-d(s_0)| \leq c/2$.

Finally we have to satisfy the first inequality of \eqref{eq:second derivatives}. At the start point $s=0$ this is always the case for one choice of the sign of $\vartheta$. Now the second inequality of \eqref{eq:second derivatives} implies the first.
\end{proof}


\def\cydot{\leavevmode\raise.4ex\hbox{.}} \def\cprime{$'$}
\providecommand{\bysame}{\leavevmode\hbox to3em{\hrulefill}\thinspace}
\providecommand{\MR}{\relax\ifhmode\unskip\space\fi MR }
\providecommand{\MRhref}[2]{%
  \href{http://www.ams.org/mathscinet-getitem?mr=#1}{#2}
}
\providecommand{\href}[2]{#2}

\end{document}